\newtheorem{theorem}{Theorem}
\theoremstyle{plain}
\newtheorem{conjecture}{Conjecture}
\newtheorem{corollary}{Corollary}
\newtheorem{definition}{Definition}
\newtheorem{example}{Example}
\newtheorem{lemma}{Lemma}
\newtheorem{remark}{Remark}
\newtheorem{assumption}{Assumption}
\numberwithin{equation}{section}
\renewcommand{\rho}{\varrho}
\newcommand\M{\, {\mathcal M}\, }
\newcommand\R{{\mathbb R}}
\newcommand\bp{{\bar p}}
\newcommand\bq{{\bar q}}
\newcommand{\ii}{\underline 1}
\DeclareMathOperator*{\argmax}{arg\,max}
\DeclareMathOperator*{\argmin}{arg\,min}
\newcommand\BR{{\mathcal B \! \mathcal R \!}}
\begin{document}
\title[Piecewise affine Hamiltonian flows]{Piecewise linear 
Hamiltonian flows associated to zero-sum games:
transition combinatorics and questions on ergodicity}
\author{Georg Ostrovski and Sebastian van Strien}
\address{Mathematics Department, University of Warwick}
\email{G.Ostrovski@warwick.ac.uk \,\,\, strien@maths.warwick.ac.uk}
\begin{abstract}
In this paper we consider a class of piecewise affine Hamiltonian 
vector fields whose orbits are piecewise straight lines. We give a first classification result of such systems
and show that the orbit-structure of the flow of such a differential
equation is surprisingly rich.
\end{abstract}
\thanks{The authors would like to thank Abed Bounemoura and Vassili Gelfreich for several valuable comments.}
\maketitle

\noindent
Keywords: Hamiltonian systems, non-smooth dynamics, Filippov systems, piecewise affine, 
Arnol'd diffusion, fictitious play, best-response dynamics, learning process \\
2000 MSC: 37J, 37N40, 37G, 34A36, 34A60, 91A20.

\section{Introduction}
Traditionally, the motivation for studying Hamiltonian systems comes either from 
classical mechanics or from partial differential equations. In this paper we 
study a particular class of Hamiltonian systems which are motivated in a completely
different way: they arise from  dynamical systems associated to game theory. 
This class of Hamiltonian systems was proposed in \cite{VanStrien2009a}
and corresponds to Hamiltonians which are piecewise affine. In this paper
we study several properties of such dynamical systems.

For general smooth functions $H\colon \R^{2n}\to \R$ for which
$H^{-1}(1)$ is (topologically) a $2n-1$ sphere, very little is known about the global dynamics of the
corresponding Hamiltonian vector field, even if we make convexity assumptions
on the level sets. If $H$ is a quadratic function,
the system is completely integrable and the flow is extremely simple.
So instead, in this paper we consider Hamiltonians $H$
which are piecewise affine and investigate whether the following 
analogy holds:
\\ \\
\begin{tabular}{ |  l | l |  }
\hline   circle diffeomorphism & circle rotation  \\
quadratic map $x\mapsto 1-ax^2$ & tent map $x\mapsto 1-a|x|$ \\
H\'enon map $(x,y)\mapsto (1-ax^2+by,x)$ & Lozi map   $(x,y)\mapsto (1-
a|x|+by,x)$ \\
Smooth Hamiltonian  & Piecewise affine Hamiltonian  \\ 
Smooth area preserving maps & Piecewise affine area preserving maps\\
\hline 
\end{tabular}
\\

\noindent This analogy would suggest that one might gain insight about smooth Hamiltonian systems by looking at
piecewise affine ones, in the same way as circle diffeomorphisms can be modelled by circle rotations. Much of the complexity of the dynamics may well persist in the piecewise affine case, and even though these systems are not smooth
they still may be easier to study. 

This paper consists of two parts.  In the first part  we prove a classification theorem for the case
when $n=2$ which goes towards a description of the global dynamics in terms of coding, 
while in the second part we give numerical results which show 
that much richness of the dynamics in the smooth case  persists. 

\medskip

Let us be more precise. We say that $H\colon \R^{2n} \to \R$ is a {\em piecewise affine} function if 

(i) $H$ is continuous and 

(ii) there exists a finite number of hyper-planes $Z_1,\dots,Z_k$ in $\R^{2n}$
so that $H$ is affine on each component of $\R^{2n}\setminus \cup_{i=1}^k Z_i$. 

Since $H$ is piecewise affine, $\left(\dfrac{\partial H}{\partial q},  \dfrac{\partial H}{\partial p}\right)$ are piecewise constant outside the hyper-planes  $Z_i$ and {\em there} the derivatives are multivalued. Consider the Hamiltonian system
\begin{equation}
\dfrac{dp}{dt}\in \dfrac{\partial H}{\partial q}, \dfrac{dq}{dt}\in -
\dfrac{\partial H}{\partial p},\label{eq-1}
\end{equation}
or more generally the Hamiltonian vector field $X_H$ associated to $H$ and the symplectic 2-form $\sum_{ij}\omega_{ij} dp_i\wedge dq_j$ (where $(\omega_{ij})$ are the (real) coefficients of some constant 
non-singular $n\times n$ matrix $\Omega$) and the corresponding differential inclusion
\begin{equation}
(\dfrac{dp}{dt},\dfrac{dq}{dt})\in X_H(p,q).\label{eq0}
\end{equation}
(Here $X_H$ is defined by requiring that $\omega(X_H,Y)=dH(Y)$ for each vector field
$Y$.)
In other words,  
\begin{equation}
\left(\begin{array}{c}\dfrac{dp}{dt} \\  \\ \dfrac{dq}{dt}\end{array}\right)\in  \left( \begin{array}{cc}  0 & \Omega \\ -\Omega' & 0 \end{array}\right) \left(\begin{array}{c}\dfrac{\partial H}{\partial p} \\  \\ \dfrac{\partial H}{\partial q}\end{array}\right),\label{eq0'}
\end{equation}
where $\Omega'$ stands for the transpose of the matrix $\Omega$. The reason we write $\in$ rather than $=$ in the above equations is because $\dfrac{\partial H}{\partial q}, \dfrac{\partial H}{\partial p}$ are multivalued. In this generality, there is no reason to assume that the flow of these differential inclusions is continuous.

Instead we will consider the special case of $H\colon \Sigma\times \Sigma\to \R$ defined by
\begin{equation}
H(p,q)= \max_i \, (\M  q)_i -  \min_j \, (p'  \M)_j\,\, ,
\label{defH}
\end{equation}
where $p'$ stands for the transpose of $p$, $(\M q)_i$ and $(p' \M)_j$ stand for the $i$-th and $j$-th component of the vectors $\M q$ respectively $p' \M$ and $\Sigma$ consists of the set of probability vectors in $\R^n$. Here we consider the corresponding Hamiltonian system\footnote{Differential equations with discontinuities along a hyperplane are often called 'Filippov systems', and there is a large literature on such systems, see for example~\cite{MR2368310}, \cite{MR1789550} and \cite{MR2103797}. The special feature of the systems we consider here is that they have discontinuities along $n\cdot(n-1)$ intersecting hyperplanes in $\Sigma \times \Sigma$.} defined on the space $\Sigma \times \Sigma$. To ensure that level 
sets are simple,  we make the following {\bf assumption} on $\M$: there exist $\bp,\bq\in \Sigma$ so that all its components are strictly positive and so that $\bp' \M=\lambda \ii'$ and $\M \bq=\mu \ii$ for some $\lambda,\mu\in \R$ where $\ii=(1\, 1 \, \dots \, 1)\in \R^n$.
A motivation for this assumption and an interpretation of $\bp,\bq$ is given in the paragraph above equation (\ref{eq:pi_proj}).
As was shown in \cite{VanStrien2009a}, one then has the following properties:
\begin{enumerate}
\item For an open set of full Lebesgue measure of $n\times n$
matrices $\M$, {\em each} level set $H^{-1}(\rho)$, $\rho>0$ is topologically a $(2n-3)$-dimensional sphere
and $H^{-1}(0)=\{(\bp,\bq)\}$ (note that $\dim(\Sigma\times \Sigma)=2n-2$). In fact, $H^{-1}(\rho)$ bounds a convex ball. 
\item For an open set of full Lebesgue measure of $n\times n$ matrices $\Omega$ and $\M$, there exists a unique solution of (\ref{eq0}) for all initial conditions and the corresponding flow $(p,q,t)\mapsto \phi_t(p,q)$ is continuous, which is not generally true for differential inclusions. 
\item The flow $(p,q,t)\mapsto \phi_t(p,q)$ is piecewise a translation flow, and first return maps to hyperplanes in $H^{-1}(\rho)$ are piecewise affine maps.
\end{enumerate}
In the second part of this paper we will present some numerical studies
of examples of such systems when $n=3$, i.e., 
when each energy level $H^{-1}(\rho)$, $\rho>0$, is  topologically a three-sphere.
Let us single out one of these examples: there exists an
open set of $3\times 3$ matrices $\M$ and $\Omega$ with the following property.
For each $\rho>0$ there exists a topological disc $D\subset H^{-1}(\rho)$ 
which is made up of 4 triangles in $\Sigma\times \Sigma$
so that \begin{enumerate}
\item the first return map $R$ to $D$ is continuous and extends continuously to the closure of $D$;
\item $R|\partial D=id$ (in fact $\partial D$ corresponds to a periodic orbit of the flow, whose Floquet multipliers are undefined);
\item $R\colon D\to D$ is area-preserving and piecewise affine;
\item each orbit in $H^{-1}(\rho)$ intersects $D$ infinitely often.
\end{enumerate}
Moreover, $R\colon D\to D$ contains hyperbolic horseshoes and also an elliptic periodic orbit, surrounded by an elliptical disk consisting of quasi-periodic orbits on invariant ellipses, see \cite{VanStrien2009a,VanStrien2009b}. Numerical simulations for these systems suggest that all orbits outside this elliptical disk in $D$ are dense in this set, see  Example 2 in Part 2 of this paper. This first return map could be a piecewise affine model for general smooth area preserving maps of the disk. 

As mentioned, one motivation for   looking at Hamiltonians as in (\ref{defH}) and the corresponding systems (\ref{eq0})
comes from game theory. Indeed, these correspond to certain differential inclusions which are naturally associated to so-called \textit{zero-sum games} in game theory. For this, let $A,B$ be $n \times n$ matrices and define $\BR_A(q):=\argmax_{p\in \Sigma} p'A q$ and $\BR_B(p):=\argmin_{q\in \Sigma} \, p' B q$ and consider the differential inclusion:
\begin{equation}
\dfrac{dp}{dt}\in\BR_A(q)-p, \dfrac{dq}{dt}\in \BR_B(p)-q.
\label{eqBR}
\end{equation}
The zeros of this equation, i.e.  the set of points $(\bp,\bq)$ for which $\bp \in \BR_A(\bq), \bq \in \BR_B(\bp)$, are called {\em Nash equilibria}. The differential inclusion (\ref{eqBR}) is called the {\em Best Response Dynamics} and the corresponding time-rescaled 
\begin{equation*}
\dfrac{dp}{dt}\in\dfrac{1}{t}(\BR_A(q)-p), 
\dfrac{dq}{dt}\in \dfrac{1}{t}(\BR_B(p)-q),
\end{equation*}
the {\em Fictitious Play Dynamics} associated to the game with matrices $A$ and $B$. In game theory and economics these are often used to {\em model learning}, i.e. to describe how people {\lq}learn{\rq} to play a game. It turns out that in the case of zero-sum games, i.e. when  $A=-B$, all orbits of (\ref{eqBR}) converge to the set of Nash equilibria, see \cite{Robinson1951} and for a short proof see \cite{Hofbauer1995}.

Note that  if we define $\M:=A=-B$
then the existence of $\bp,\bq\in \Sigma$ for which all coordinates
are strictly positive and for which $\bp' \M=\lambda \ii'$ and $\M \bq=\mu \ii$ 
(as assumed just below
(\ref{defH})) implies that $0\in \BR_A(\bq) - \bp$ and $0\in \BR_B(\bp) - \bq$.
So such a point $(\bp,\bq)\in \Sigma\times \Sigma$ is a Nash equilibrium.
To see this, notice that $\BR_{\M}(\bq)$ is the convex hull of all the unit vectors
corresponding to the largest component(s) of $\M\bq$. So if all components of $\bp,\bq$ 
are strictly positive, then $0\in \BR_A(\bq)-\bp$  holds iff all coordinates of $\M\bq$
are equal. 

We should note that $p$ and $q$ have a {\em different connotation} here from the usual one
in classical mechanics: $p$ corresponds to the position (the probablity  vector describing past play) of the first player
and $q$ the corresponding object for the second player.  Although the equation (\ref{eqBR}) itself is
not Hamiltonian at all, 
it is closely related to the Hamiltonian system (\ref{eq0}). Indeed, for  $(p,q)\in \Sigma\times \Sigma\setminus \{\bp,\bq\}$, let $l(p,q)$ be the half-line from $(\bp,\bq)$ containing $(p,q)$ and define 
\begin{equation}\label{eq:pi_proj}
\pi\colon \Sigma\times \Sigma\setminus \{\bp,\bq\} \rightarrow H^{-1}(1),
\end{equation}
where $\pi(p,q)\in H^{-1}(1)$ is the intersection of $l(p,q)$ with the $2n-3$-dimensional sphere $H^{-1}(1)$. It turns out that the projection of the flow on $H^{-1}(1)$ corresponds to the solution of a Hamiltonian system as above. In other words, the Hamiltonian dynamics describes the {\lq}spherical coordinates{\rq}. So we will think of the dynamics of (\ref{eqBR}) as {\em inducing} Hamiltonian dynamics. For more details see Section \ref{sec:ham_br_dynamics}.

In this paper we will study Hamiltonian dynamics coming from Best Response dynamics as in (\ref{eqBR}) in the case where $n=3$.
The aim of this paper is the following:
\begin{itemize}
\item Because of the special nature of the Hamiltonian systems we consider in this paper, one can associate in a natural way itineraries to each orbit. In the Main Theorem of this paper we will show that not all itineraries are possible and we will give a full classification of all possible transition diagrams\footnote{A transition diagram shows all allowable transitions within orbits (but not all allowable transitions correspond to actual orbits).}.
\item The Hamiltonian dynamics appearing in this paper is much simpler than usual: the first return maps to planes are piecewise translations. In the second part of this paper we will show numerical simulations concerning a number of examples of such systems. The four examples which we show here, display (conjecturally):
\begin{enumerate}
\item fully ergodic behaviour;
\item elliptic behaviour of a very simple type (which
we can prove rigorously, see \cite{VanStrien2009a});
\item elliptic behaviour of a {\lq}composite{\rq} type;
\item Arnol'd diffusion, and intertwining of various elliptic regions.
\end{enumerate}
\item We believe that the elliptic behaviour occuring in our systems satisfies a huge amount of regularity, and we will formulate several questions and conjectures formalizing this.
\end{itemize}

\part{Combinatorial results}

In the first part of the paper we introduce the Best Response and Fictitious Play Dynamics inducing a special case of the Hamiltonian Dynamics presented above. We introduce a combinatorial description of the BR dynamics and provide a combinatorial characterisation of the dynamics for zero-sum games for $n = 3$ (inducing Hamiltonian dynamics with two degrees of freedom).

\section{Hamiltonian and Best Response Dynamics}
\label{sec:ham_br_dynamics}

We define $\Sigma_A$ to be the simplex of probability row vectors in $\mathbb{R}^n$ and $\Sigma_B$ the simplex of probability column vectors in $\mathbb{R}^n$: 
\begin{align*}
\Sigma_A = \left\{ p \in \mathbb{R}^{1 \times n} : p_i \geq 0, \sum p_i = 1 \right\},
\Sigma_B = \left\{ q \in \mathbb{R}^{n \times 1} : q_i \geq 0, \sum q_i = 1 \right\}.
\end{align*}
We denote $\Sigma = \Sigma_A \times \Sigma_B$.

We consider a bimatrix $(A,B)$, where $A=(a_{ij}),B=(b_{ij}) \in \mathbb{R}^{n \times n}$. With a slight abuse of notation we identify the standard unit vector $e_k$ with the integer $k$ and define the following correspondences\footnote{In a game theoretic context the matrices are \textit{payoff matrices} of two players A and B, where each player has $n$ pure strategies, i.e. rows or columns, to choose from. $\Sigma_A$ and $\Sigma_B$ are the spaces of \textit{mixed strategies}, i.e. probability distributions over the $n$ \textit{pure strategies} that are represented as the standard unit vectors. $\BR_A$ and $\BR_B$ are the two players' respective \textit{best-response correspondences}, assigning a payoff-maximising pure strategy answer to the strategy played by the opponent.}:

\begin{align*}
&\BR_A(q) = \argmax_{p\in \Sigma_A} p A q = \argmax_i \left( A q\right)_i \text{ for } q \in \Sigma_B,\\
&\BR_B(p) = \argmax_{q\in \Sigma_B} p B q = \argmax_j \left( p B\right)_j \text{ for } p \in \Sigma_A.
\end{align*}

These correspondences are singlevalued almost everywhere, except on a finite number of hyperplanes. On these hyperplanes (also called \textit{indifference planes}) at least one of the $\BR$ correspondences has as its values the set of convex combinations of two (ore more) unit vectors. 

We can now define the \textit{Fictitious Play Dynamics} as a continuous time dynamical system in $\Sigma$:
\begin{align}\tag{FP}\label{eq:fp_dynamics}
\begin{split}
&\frac{dp}{dt} \in \frac{1}{t}\left( \BR_A(q(t)) - p(t) \right), \\
&\frac{dq}{dt} \in \frac{1}{t}\left( \BR_B(p(t)) - q(t) \right),
\end{split}
\end{align}
for $t > 1$ and some given initial value $(p(1), q(1)) \in \Sigma$. Note that the right hand side is singlevalued almost everywhere.

Although (\ref{eq:fp_dynamics}) is more common in game theory, where it serves as a model of \textit{myopic learning}, we prefer to consider the following time reparametrisation, referred to as the \textit{Best Response Dynamics}:
\begin{align}\tag{BR}\label{eq:br_dynamics}
\begin{split}
&\frac{dp}{ds} \in \BR_A(q(s)) - p(s) , \\
&\frac{dq}{ds} \in \BR_B(p(s)) - q(s) .
\end{split}
\end{align}
The orbits of both systems coincide (differing only in time parametrisation) but (\ref{eq:br_dynamics}) has the advantage of being autonomous. The existence of solutions for any initial conditions follows from upper semicontinuity of $\BR_A$ and $\BR_B$, see \cite[Chapter 2.1]{Aubin1984}.

The classical learning processes (\ref{eq:fp_dynamics}) and (\ref{eq:br_dynamics}) are naturally related to Hamiltonian Dynamics via the following construction. Let $H \colon \Sigma_A \times \Sigma_B \rightarrow \R$ be defined as
\begin{equation}
H(p,q) = \max_i \left(Aq\right)_i - \min_j\left(pA\right)_j.
\end{equation}
Further let $\Omega = ( \omega_{ij} ) $ be some non-singular $n \times n$ matrix. Consider a Hamiltonian vector field $X_H$ associated to $H$ and the symplectic 2-form $\sum_{ij}\omega_{ij} dp_i\wedge dq_j$. The corresponding differential inclusion is 
\begin{equation}
(\dfrac{dp}{dt},\dfrac{dq}{dt})\in X_H(p,q).
\label{eq3}
\end{equation}
Let us denote $T \Sigma_A = T \Sigma_B = \left\{ v \in \mathbb{R}^n : \sum v_i = 0 \right\}$, and let $\Omega', A'$ be the transpose of the matrices $\Omega$ and $A$. Further let $P_A\colon \R^n \rightarrow T \Sigma_A , P_B\colon \R^n \rightarrow T \Sigma_B$ be the parallel projections to $T \Sigma_A$ and $T \Sigma_B$ along the vectors $\Omega'^{-1} \ii$ and $\Omega^{-1} \ii$ respectively (where $\ii = (1 1 \ldots 1) \in \R^n$). Then a simple calculation (see \cite{VanStrien2009a}) shows that the above inclusion (\ref{eq3}) takes the form
\begin{align}\label{eq:ham_eq}
\begin{split}
\dfrac{dp}{dt} &\in P_A \Omega'^{-1} A' \BR_A(q),\\
\dfrac{dq}{dt} &\in P_B \Omega^{-1} A \BR_B(p).
\end{split}
\end{align}
The projections $P_A,P_B$ appear in these equations because $H$ is considered
as a function on $\Sigma = \Sigma_A \times \Sigma_B$ and so the dynamics is constrained to 
this affine subspace. The Hamiltonian differential inclusion (\ref{eq:ham_eq})
is closely related to the BR dynamics if we take  $\Omega=A$.
In this case the dynamics defined on $H^{-1}(1)$ by (\ref{eq:ham_eq}) equals the BR-dynamics projected via $\pi$ (defined in equation (\ref{eq:pi_proj})) to this level set of $H$ (for details see \cite{VanStrien2009a}).
In other words, if we compute an orbit under the BR-dynamics then the image under $\pi$ of this orbit
is an orbit under the Hamiltonian dynamics (\ref{eq:ham_eq}). 

For this reason, for the rest of this paper we will assume $\Omega = A$, where $A \in \mathbb{R}^{3\times 3}$ is non-singular. 

\medskip

For later use, let us make some simple observations. 
$\Sigma_B$ can be divided into $n$ convex regions $R^A_i = \BR_A^{-1}(e_i)$, where $i \in \left\{ 1, \ldots, n\right\}$ and analogously $R^B_j = \BR_B^{-1}(e_j) \subset \Sigma_A$, $j \in \left\{ 1, \ldots, n\right\}$. 

\begin{figure}
\begin{center}
\includegraphics{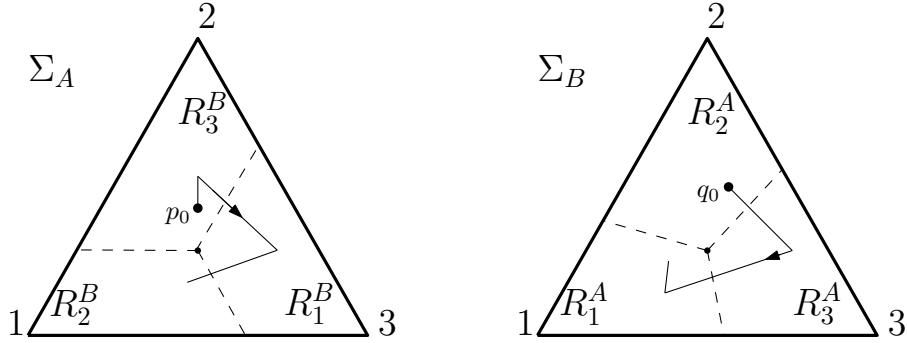}
\end{center}
\caption{An example of $\Sigma_A, \Sigma_B \subset \mathbb{R}^3$  being partitioned into regions $R^B_j$ and $R^A_i$. The dashed lines indicate the hyperplanes at which one of the $\BR$ correspondences is multivalued. Also shown is a piece of an orbit with initial conditions $(p_0,q_0)$. The orbit changes direction four times: when $p$ crosses a dashed line, $q$ changes direction, and vice versa. Note that this is an orbit for the BR dynamics on $\Sigma = \Sigma_A \times \Sigma_B$, not for the induced Hamiltonian dynamics on $H^{-1}(1)$.}
\label{fig:fp_example}
\end{figure}

Since $\BR_A \times \BR_B$ is constant on $R_{ij} = R^B_j \times R^A_i$, (\ref{eq:fp_dynamics}) and (\ref{eq:br_dynamics}) have continuous orbits which are piecewise straight lines heading for vertices $(e_k,e_l) \in \Sigma$ whenever $(p(t),q(t)) \in R_{kl}$. The orbits only change direction at a finite number of hyperplanes, namely whenever $\BR_A$ or $\BR_B$ (or both) become multivalued. More precisely, $p(t)$ changes direction whenever $q(t)$ passes from $R^A_i$ to $R^A_{i'}$ for some $i \neq i'$, and vice versa. See Fig.\ref{fig:fp_example} for an example with $n = 3$.

\section{Combinatorial Description for the Case $n = 3$}

In this section and later on we restrict our attention to the case of dimension $n = 3$. The partition of $\Sigma$ into the convex blocks $R_{ij}$ quite naturally gives rise to coding of orbits of (\ref{eq:br_dynamics}) and (\ref{eq:fp_dynamics}). We codify an orbit $(p(t),q(t))$ by a (finite or infinite, one-sided) itinerary $(i_0,j_0)\rightarrow (i_1,j_1)\rightarrow \ldots \rightarrow (i_k,j_k)\rightarrow \ldots$ indicating that there exists a sequence of times $(t_k)$, such that $(p(t),q(t)) \in R_{i_k,j_k}$ for $t_k < t < t_{k+1}$. To simplify notation we will often write $(i,j)$ instead of $R_{ij}$. 

\begin{figure}
\begin{center}
\includegraphics[width = 0.5\textwidth]{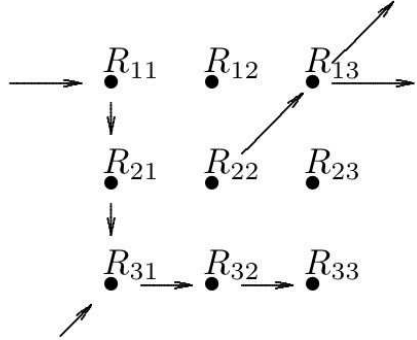}
\end{center}
\caption{A hypothetical graph representing transitions for the itineraries of orbits of BR dynamics. We will see later that this very graph cannot be a transition graph for a bimatrix game.}
\label{fig:markov}
\end{figure}

Abstractly we have a graph of nine vertices $(i,j),~i,j=1,2,3$ with directed edges between them (see Fig.\ref{fig:markov}).

It is not difficult to see that for an orbit of BR dynamics for a fixed bimatrix $(A,B)$, such itinerary can then contain $(i,j) \rightarrow (i',j')$, $i \neq i'$ (or $j \neq j'$) if and only if $a_{i'j} \geq a_{ij}$ ($b_{ij'} \geq b_{ij}$). Note that for almost all initial conditions in the corresponding orbits $p$ and $q$ never switch directions simultaneously, i.e. the itinerary only contains transitions of the form $(i,j) \rightarrow (i',j)$ and $(i,j) \rightarrow (i,j')$, $i \neq i'$, $j \neq j'$ (see \cite{Sparrow2007}). 

Further we want to make sure that for any $i \ne i'$ ($j \ne j'$) there is only one possible transition direction between $(i,j)$ and $(i',j)$ ($(i,j)$ and $(i,j')$). Therefore we introduce the following non-degeneracy assumption on the bimatrix $(A,B)$:

\begin{assumption} \label{as:non_deg}
$a_{ij} \neq a_{i'j}$ and $b_{ij} \neq b_{ij'}$ for all $i,i',j,j'$ with $i \neq i'$ and $j \neq j'$. 
\end{assumption}
Clearly the set of bimatrices satisfying this assumption is open dense with full Lebesgue measure in the space of bimatrices. 

The possible transitions can be expressed in a \textit{transition diagram} as in Fig.\ref{fig:diag_intro}(a). The three rows and three columns of the diagram represent the regions $R^A_i,~i=1,2,3$ and $R^B_j,~j=1,2,3$, respectively. The arrows indicate the possible transitions between the regions, which by Assumption \ref{as:non_deg} always have a unique direction. For example, the itinerary of an orbit of the BR dynamics for a given bimatrix can contain $(1,2) \rightarrow (1,3)$ if and only if in the first row of its transition diagram an arrow points from the second into the third column. Opposite sides of the diagram should be thought of as identified, so that possible transitions between the first and third rows and columns are indicated by arrows on the boundary of the diagram. It is important to note that this partition does not have the nice properties of a Markov partition: there is no claim that every itinerary that can be obtained from the transition diagram can actually be realised by an orbit of the BR dynamics.

\begin{figure}
\begin{center}
\includegraphics[scale=0.8]{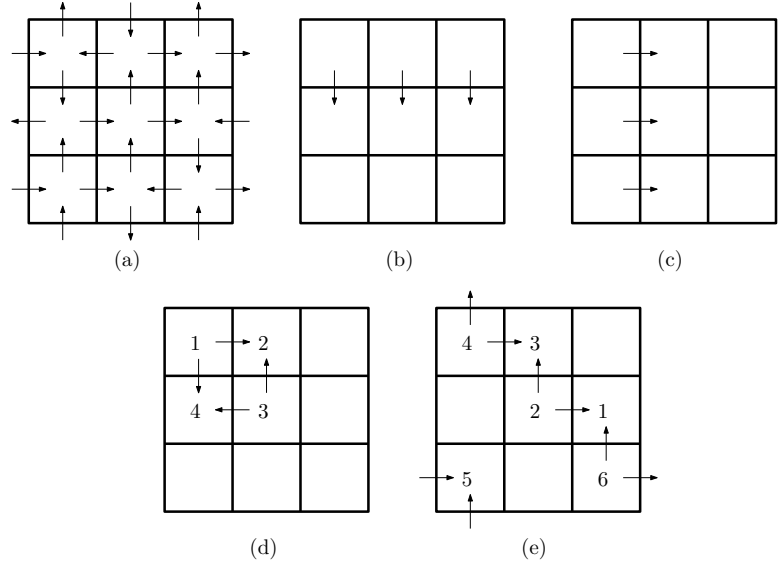}
\end{center}
\caption{(a) example of a transition diagram 
\newline (b) dominated row  
\newline (c) dominated column  
\newline (d),(e) examples of alternating cycles (which will be shown to be impossible in zero-sum games)}
\label{fig:diag_intro}
\end{figure}

One may now ask whether a given transition diagram is realisable as the transition diagram of BR dynamics for a bimatrix game $(A,B)$, and how properties of a game relate to the combinatorial information given by its transition diagram. 

A simple first observation is that no row of a transition diagram can have three horizonal arrows pointing in the same direction, as this would imply $a_{i,j} > a_{i,j'} > a_{i,j''} > a_{i,j}$. Analogously no column of such diagram can have three vertical arrows pointing in the same direction. 

It is easy to see that apart from this restriction, any transition diagram can be realised by appropriate choice of $(A,B)$. However, our interest lies in bimatrix games whose BR dynamics displays 'non-trivial' behaviour. For this we introduce the following assumption:
\begin{assumption}\label{as:dom_strat}
In the transition diagram no row (column) is dominated by another row (column), i.e. no three vertical (horizontal) arrows between two rows (columns) point in the same direction (Fig.\ref{fig:diag_intro}(b)-(c))
\footnote{In game theoretic terms this means that none of the players has any 'strictly dominated pure strategy'. The existence of such strategy would mean that the dynamics essentially reduces to the FP/BR dynamics of a $2 \times 3$ game, which under mild genericity conditions is known to converge to the set of Nash Equilibria in a rather simple way, see \cite{Berger2005}.}.
Formally, let $\{i,i',i''\} = \{j, j', j''\} = \{1,2,3\}$, then
\begin{equation*}
(i,j) \rightarrow (i',j) \text{ and } (i,j') \rightarrow (i',j') \Rightarrow (i',j'') \rightarrow (i,j'')
\end{equation*}
and 
\begin{equation*}
(i,j) \rightarrow (i,j') \text{ and } (i',j) \rightarrow (i',j') \Rightarrow (i'',j') \rightarrow (i'',j).
\end{equation*}
\end{assumption}

\section{Zero-Sum Games}

In game theory, an important class of (bimatrix) games are the zero-sum games, i.e. games $(A,B)$, such that $A+B = 0$. However, our analysis remains valid for a larger class, namely games that are linearly equivalent to a zero-sum game. 

\begin{definition}
Two $3\times 3$ bimatrix games $(A,B)$ and $(C,D)$ are \textbf{linearly equivalent}, if there exist $e>0, f_j, j=1,2,3$ and $g>0, h_i, i = 1,2,3$ such that
\begin{equation*}
c_{ij} = e a_{ij} + f_j \text{ and } d_{ij} = g b_{ij} + h_i .
\end{equation*}
\end{definition}
It can be checked that for linearly equivalent bimatrix games $(A,B)$ and $(C,D)$, the respective best-response correspondences coincide: $\BR_A = \BR_C$ and $\BR_B = \BR_D$. From the definitions it immediately follows that linearly equivalent bimatrix games induce the same dynamics (\ref{eq:fp_dynamics}) and (\ref{eq:br_dynamics}). Since our main focus lies on these dynamical processes, in the rest of this text we call a game $(A,B)$ zero-sum, if there exists a linearly equivalent game $(C,D)$, such that $C+D = 0$.

\begin{definition}\label{def:nash_eq}
A point $(\bp,\bq) \in \Sigma$ is called \textbf{Nash Equilibrium} if $\bp \in \BR_A(\bq)$ and $\bq \in \BR_B(\bp)$. 
\end{definition}

By the definition, Nash Equilibria are precisely the fixed points of (\ref{eq:fp_dynamics}) and (\ref{eq:br_dynamics}). It has been proved by John Nash, that every game with finitely many players and strategies has got a Nash Equilibrium (see \cite{NashJohnForbes1951}). A very important classical result is the following:

\begin{theorem}[Brown \cite{Brown1951}, Robinson \cite{Robinson1951}]\label{thm:zs_conv}
For a zero-sum game, every orbit of the (\ref{eq:br_dynamics}) and (\ref{eq:fp_dynamics}) converges to the set of Nash Equilibria.
\end{theorem}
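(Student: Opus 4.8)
The plan is to use the Hamiltonian $H(p,q) = \max_i(Aq)_i - \min_j(pA)_j$ itself as a Lyapunov function for (\ref{eq:br_dynamics}), exploiting the zero-sum structure. Since the dynamics is invariant under linear equivalence, I may assume $B = -A$, so that $\BR_B(p) = \argmin_j(pA)_j$. Consequently $\max_i(Aq)_i = \BR_A(q)\,A\,q$ and $\min_j(pA)_j = p\,A\,\BR_B(p)$, i.e. both extrema are attained by the current best responses. This is the only place the zero-sum hypothesis enters, and it is exactly what makes the computation below close up.

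First I would record two elementary facts. For any $(p,q)\in\Sigma$ one has $\min_j(pA)_j \le pAq \le \max_i(Aq)_i$, because $pAq$ is simultaneously a convex combination of the $(pA)_j$ and of the $(Aq)_i$; hence $H \ge 0$ on $\Sigma$. Moreover $H(p,q)=0$ forces $\max_i(Aq)_i = pAq = \min_j(pA)_j$, which is precisely the saddle (minimax) condition characterising a Nash equilibrium, so $H^{-1}(0)$ equals the set of Nash equilibria. Since $H$ is continuous on the compact set $\Sigma$, a standard compactness argument then shows that $H(p(s),q(s))\to 0$ implies $\dist\big((p(s),q(s)),\,H^{-1}(0)\big)\to 0$, i.e. convergence of the orbit to the set of Nash equilibria.

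The heart of the proof is the identity $\frac{d}{ds}H(p(s),q(s)) = -H(p(s),q(s))$ along orbits of (\ref{eq:br_dynamics}). At almost every $s$ the orbit lies off all indifference hyperplanes, so $a := \BR_A(q(s))$ and $b := \BR_B(p(s))$ are locally constant unit vectors realising the extrema, $\max_i(Aq)_i = aAq$ and $\min_j(pA)_j = pAb$. Differentiating and substituting $\dot p = a - p$, $\dot q = b - q$ gives
\begin{align*}
\tfrac{d}{ds}\max_i(Aq)_i &= aA\dot q = aAb - aAq,\\
\tfrac{d}{ds}\min_j(pA)_j &= \dot p A b = aAb - pAb,
\end{align*}
whence $\frac{d}{ds}H = (aAb - aAq) - (aAb - pAb) = pAb - aAq = \min_j(pA)_j - \max_i(Aq)_i = -H$. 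Integrating yields $H(p(s),q(s)) = H(p(0),q(0))\,e^{-s}\to 0$, and the compactness argument of the previous paragraph finishes the BR case. Finally, since (\ref{eq:fp_dynamics}) is the reparametrisation of (\ref{eq:br_dynamics}) by $s = \log t$ (with $s\to\infty$ as $t\to\infty$), the orbits coincide as sets and the same conclusion transfers verbatim to the Fictitious Play dynamics.

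The step I expect to be the main obstacle is the rigorous justification of the derivative identity across the non-smoothness, since $s\mapsto(p(s),q(s))$ is only Lipschitz and (\ref{eq:br_dynamics}) is a differential inclusion. The clean way is to note that $s\mapsto H(p(s),q(s))$ is absolutely continuous (composition of the Lipschitz function $H$ with a Lipschitz orbit), so it is recovered from its almost-everywhere derivative; the identity $\frac{d}{ds}H = -H$ then integrates to exponential decay via the linear Gronwall estimate. The delicate point is the behaviour at the finitely many indifference hyperplanes: one must verify that the set of times at which the orbit meets or slides along such a hyperplane is null (so that the a.e. identity suffices), and that no selection in the inclusion $(\dot p,\dot q)\in(\BR_A(q)-p)\times(\BR_B(p)-q)$ spoils the decay there. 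Once this measure-theoretic bookkeeping is settled, the exponential decay of $H$, and hence the theorem, follow.
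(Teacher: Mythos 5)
The paper does not prove this theorem at all: it is quoted as a classical result of Brown and Robinson, with a pointer to Hofbauer's short Lyapunov-function proof, and your argument is precisely that proof — the identity $\frac{d}{ds}H=-H$ with $H(p,q)=\max_i(Aq)_i-\min_j(pA)_j$, exponential decay, and $H^{-1}(0)$ equal to the Nash set. It is correct. The one obstacle you flag is in fact harmless and does not require showing that the orbit spends a null set of times on the indifference planes: since $s\mapsto\max_i(Aq(s))_i$ is Lipschitz (a finite max of affine functions of the Lipschitz curve $q$), at any $s$ where it and $q$ are both differentiable its derivative equals $(A\dot q(s))_i$ for \emph{every} $i$ attaining the max, hence equals $aA\dot q(s)$ for any selection $a\in\BR_A(q(s))$ (a convex combination of such $e_i$); the same holds for the min term, so the a.e.\ identity $\dot H=-H$ holds even while the orbit slides along an indifference plane, and absolute continuity of $H$ along the orbit then integrates it to $H(s)=H(0)e^{-s}$.
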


A short proof using an explicitly given Lyapunov function can be found in \cite{Hofbauer1995}. In the same paper, Hofbauer states the converse conjecture, which still remains open:

\begin{conjecture}[Hofbauer \cite{Hofbauer1995}]\label{conj:hofbauer_conj}
A bimatrix game with a unique Nash Equilibrium point in $\mathring \Sigma$ that is stable under the BR (or FP) dynamics must be a zero-sum game.
\end{conjecture}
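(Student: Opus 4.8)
The plan is to attack the conjecture by contraposition, using the piecewise-affine return map that organizes the BR flow near an interior equilibrium. Fix a game $(A,B)$ with a unique Nash equilibrium $(\bp,\bq)\in\mathring\Sigma$. By the discussion following (\ref{eq:br_dynamics}), every orbit is piecewise linear, heading toward a vertex $(e_k,e_l)$ while it lies in a block $R_{kl}$ and changing direction only when $p$ or $q$ crosses an indifference plane. Stability of $(\bp,\bq)$ is therefore captured by a Poincar\'e return map $R$ to a suitable cross-section (one of the indifference planes, or the sphere $H^{-1}(1)$ reached via the projection $\pi$ of (\ref{eq:pi_proj})), and each branch of $R$ is a composition of the affine flow segments, hence $R$ is piecewise affine. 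So the conjecture becomes: the fixed point of the piecewise-affine map $R$ is Lyapunov-stable if and only if $(A,B)$ is linearly equivalent to a zero-sum game.

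First I would carry out a combinatorial reduction. Uniqueness of the interior equilibrium together with Assumption \ref{as:dom_strat} (no dominated rows or columns, so the dynamics does not collapse to a $2\times 3$ game) should force the transition diagram to be a single oriented cycle through a cyclic sequence of blocks $R_{ij}$, so that the itinerary of a nearby orbit is eventually periodic with this pattern. This is where the classification developed in this paper is meant to enter: the fact that \emph{alternating cycles} (Fig.\ref{fig:diag_intro}(d)-(e)) cannot occur for zero-sum games is one half of a dictionary between the sign data $\operatorname{sgn}(a_{i'j}-a_{ij})$, $\operatorname{sgn}(b_{ij'}-b_{ij})$ and the zero-sum property. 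I would try to show that the cyclic orderings compatible with stability are exactly those realizable by zero-sum games, which narrows the set of itineraries one must analyze quantitatively.

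Second comes the quantitative heart. I would compute $R$ explicitly as the composition of the affine maps induced by the successive segments around one loop, and extract the multiplier $\lambda$ of $R$ at its fixed point (equivalently, the product of the per-segment expansion factors along the cyclic itinerary). The target is to prove that $\lambda\le 1$, so that $(\bp,\bq)$ is stable, \emph{exactly} when the entries of $A$ and $B$ satisfy the relation $c_{ij}=e\,a_{ij}+f_j$, $d_{ij}=g\,b_{ij}+h_i$ characterizing linear equivalence to $B=-A$. The zero-sum direction is consistent with Theorem \ref{thm:zs_conv}: there $H$ of (\ref{defH}) is a strict Lyapunov function, so $\lambda<1$ and orbits spiral inward. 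The new content is the converse: every game not linearly equivalent to a zero-sum game should have $\lambda>1$, so that its interior equilibrium repels and orbits spiral out toward an attracting limit cycle. By contraposition this is precisely the conjecture.

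The main obstacle is exactly this last step. Each affine segment contributes a factor depending on where the orbit next meets an indifference plane, and these meeting points depend on the matrix entries in a nonlinear way; assembling them into a closed-form loop invariant, and then showing that the inequality $\lambda\le 1$ collapses onto the linear-equivalence condition, is delicate, and is presumably why the conjecture remains open even for $n=3$. A second, structural difficulty is that \emph{stable} must be read as Lyapunov stability of a non-smooth map: the affine pieces do not fit together smoothly at the indifference planes, so one cannot linearize and must control the full return map rather than a Jacobian. I would therefore not expect the combinatorial reduction alone to settle the conjecture; it only organizes the itineraries, while the decisive work lies in the analytic identification of $\{\lambda\le 1\}$ with the zero-sum locus.
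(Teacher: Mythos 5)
You are attempting to prove Conjecture \ref{conj:hofbauer_conj}, which the paper explicitly records as an open problem due to Hofbauer; the paper contains no proof of it, so there is no proof of this statement to compare yours against. Your text is a plan of attack rather than a proof, and you concede as much in your final paragraph: the ``quantitative heart'' --- showing that every game not linearly equivalent to a zero-sum game has an unstable interior equilibrium --- is precisely the content of the conjecture, and you do not carry it out.

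Beyond being incomplete, two of your intermediate steps would fail. First, uniqueness of the interior equilibrium does not force the transition diagram to be a single oriented cycle: Theorem \ref{thm:main} classifies 23 combinatorially distinct diagrams for zero-sum games alone, none of which is a single cycle through all nine blocks, and the numerical examples (e.g.\ Example \ref{ex:erg}) exhibit orbits whose itineraries are apparently not eventually periodic at all; so the reduction of stability to ``a cyclic itinerary with one multiplier $\lambda$'' is not available. Second, the picture of a scalar multiplier with $\lambda<1$ in the zero-sum case is inconsistent with the structure of the system: the return maps to the indifference planes are piecewise affine and \emph{area-preserving} (on $H^{-1}(\rho)$ the flow is piecewise a translation flow), so there is no contraction within a level set. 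The convergence in Theorem \ref{thm:zs_conv} comes from the Lyapunov function $H$ decreasing transversally to the spheres $H^{-1}(\rho)$, while the induced dynamics on each sphere remains area-preserving and can be ergodic or contain elliptic islands and horseshoes (Examples \ref{ex:erg}--\ref{ex:arnold_diff}). Any serious attack on the converse direction must therefore quantify instability for a genuinely two-dimensional, area-preserving, non-smooth return map --- not a one-dimensional expansion factor --- and must handle non-periodic itineraries; this is exactly the difficulty that has kept the conjecture open.
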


As the above indicates, zero-sum games are of great interest in the study of BR and FP dynamics. A natural question to ask is now, which combinatorial configurations (transition diagrams) can be realised by zero-sum games. In this paper we will restrict our attention to zero-sum games with a unique Nash Equilibrium in the interior of $\Sigma$:

\begin{assumption}\label{as:int_ne}
The bimatrix game $(A,B)$ has a unique Nash Equilibrium point $(E^A,E^B)$, which lies in the interior of $\Sigma$. Equivalently (see for instance \cite{VanStrien2009a}), there exists precisely one point $E^B \in \mathring{\Sigma}_B$, such that $(A E^B)_i = (A E^B)_j ~\forall i,j$, and precisely one point $E^A \in \mathring{\Sigma}_A$, such that $(E^A B)_i = (E^A B)_j ~\forall i,j$.
\end{assumption}

\begin{lemma}
For every bimatrix game $(A,B)$, Assumption \ref{as:int_ne} implies Assumption \ref{as:dom_strat}.
\end{lemma}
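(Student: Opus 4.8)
The plan is to establish each half of Assumption \ref{as:dom_strat} directly, as an implication, by combining the explicit transition rule with the strict positivity of the coordinates of the interior equilibrium. First I recall the combinatorial dictionary set up above: by Assumption \ref{as:non_deg} a vertical transition $(i,j) \to (i',j)$ occurs precisely when $a_{i'j} > a_{ij}$, and a horizontal transition $(i,j) \to (i,j')$ precisely when $b_{ij'} > b_{ij}$. Under this dictionary the row implication of Assumption \ref{as:dom_strat} reads: $a_{i'j} > a_{ij}$ and $a_{i'j'} > a_{ij'}$ must force $a_{ij''} > a_{i'j''}$; and the column implication reads: $b_{ij'} > b_{ij}$ and $b_{i'j'} > b_{i'j}$ must force $b_{i''j} > b_{i''j'}$.

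For the row implication I would argue by contradiction. Write $E^B = (x_1,x_2,x_3)$ with $x_k > 0$ and $\sum_k x_k = 1$, which exists by Assumption \ref{as:int_ne} and for which all components of $A E^B$ coincide. Suppose $a_{i'j} > a_{ij}$ and $a_{i'j'} > a_{ij'}$ but, contrary to the desired conclusion, $a_{ij''} \le a_{i'j''}$; by Assumption \ref{as:non_deg} this means $a_{i'j''} > a_{ij''}$. Then $a_{i'k} > a_{ik}$ holds for every $k \in \{j,j',j''\} = \{1,2,3\}$, and since each $x_k$ is strictly positive the weighted sums satisfy $(A E^B)_{i'} = \sum_k a_{i'k}x_k > \sum_k a_{ik}x_k = (A E^B)_i$, contradicting the equality of all components of $A E^B$. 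Hence $a_{ij''} > a_{i'j''}$, which is exactly $(i',j'') \to (i,j'')$.

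The column half is the transpose of this argument: using the interior point $E^A \in \mathring{\Sigma}_A$ from Assumption \ref{as:int_ne}, all of whose coordinates are strictly positive and for which all components of $E^A B$ coincide, one assumes $b_{ij'} > b_{ij}$ and $b_{i'j'} > b_{i'j}$ together with the negation $b_{i''j'} > b_{i''j}$ of the conclusion, deduces $b_{kj'} > b_{kj}$ for every $k = 1,2,3$, and hence $(E^A B)_{j'} = \sum_k (E^A)_k b_{kj'} > \sum_k (E^A)_k b_{kj} = (E^A B)_j$, again a contradiction.

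The heart of the matter, and the only place where the interiority hypothesis enters, is the strict positivity of the coordinates of $E^B$ and $E^A$: it is precisely this that upgrades an entrywise domination into a strict inequality between the barycentric averages $(A E^B)_i$. Were the equilibrium allowed on the boundary, a dominated row could tie with its dominator against $E^B$ whenever the deficient coordinate carried zero weight, so the conclusion would fail; I therefore expect no genuine obstacle beyond keeping the index bookkeeping of the two implications straight. Note also that the argument uses neither the zero-sum hypothesis nor the uniqueness clause of Assumption \ref{as:int_ne} — mere existence of an interior equilibrium suffices, consistent with the lemma being asserted for every bimatrix game.
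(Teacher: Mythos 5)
Your proof is correct, and it reaches the same contradiction as the paper but by a slightly different mechanism. The paper's argument is geometric: if row $i$ is dominated by row $i'$ then the region $R_i^A=\BR_A^{-1}(e_i)$ is empty, while Assumption~\ref{as:int_ne} forces every $R_{kl}$ to meet every neighbourhood of the interior equilibrium --- two incompatible statements. You instead evaluate the entrywise domination at the single point $E^B$ (resp.\ $E^A$): strict positivity of its coordinates turns $a_{i'k}>a_{ik}$ for all $k$ into $(AE^B)_{i'}>(AE^B)_i$, contradicting the defining tie $(AE^B)_i=(AE^B)_{i'}$ from Assumption~\ref{as:int_ne}. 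The two routes rest on the same underlying fact (interiority is what upgrades entrywise domination to a strict inequality of averages), but yours is the more self-contained: it does not appeal to the claim, stated without proof in the paper, that each $R_{kl}$ intersects every neighbourhood of the equilibrium, and it makes explicit that only the \emph{existence} of an interior tie point is used, not its uniqueness nor the zero-sum structure. Your closing observation about why the conclusion fails for boundary equilibria correctly identifies where the hypothesis enters.
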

\begin{proof}
If Assumption \ref{as:dom_strat} does not hold, there is a row (column) in the transition diagram for $(A,B)$ which is dominated by another row (column), say $i$ dominated by $i'$. It follows that $R_i^A$ ($R_i^B$) is empty. 

On the other hand, Assumption \ref{as:int_ne} implies that for all $k,l$, $R_{kl} = R_l^B\times R_k^A$ has non-empty intersection with every neighbourhood of the Nash Equilibrium.
\end{proof}

For later use, we make the following definitions:

\begin{definition}\label{def:alt_cycle}
A sequence $(i_0,j_0),  (i_1,j_1), \ldots , (i_n,j_n) = (i_0,j_0)$ is called \textbf{alternating cycle} for $(A,B)$, if after reversing either all downward and upward pointing or all right and left pointing arrows in the transition diagram of $(A,B)$, it forms a directed loop. Examples of alternating cycles are shown in Fig.\ref{fig:diag_intro}(d) and (e). More formally, either
\begin{itemize}
\item $(i_k,j_k) \rightarrow (i_{k+1},j_{k+1})$ whenever $i_k \neq i_{k+1}$ and $(i_{k+1},j_{k+1}) \rightarrow (i_k,j_k)$ whenever $j_k \neq j_{k+1}$, or
\item $(i_{k+1},j_{k+1}) \rightarrow (i_k,j_k)$ whenever $i_k \neq i_{k+1}$ and $(i_k,j_k) \rightarrow (i_{k+1},j_{k+1})$ whenever $j_k \neq j_{k+1}$.
\end{itemize}
\end{definition}

\begin{definition}
We call $(i,j)$ a \textbf{sink}, if it can be entered but not left by trajectories of the BR dynamics (i.e. $(i',j) \rightarrow (i,j)$ and $(i,j') \rightarrow (i,j)$ for $i' \neq i$ and $j' \neq j$). Conversely we call it a \textbf{source}, if it can be left but not entered. 
\end{definition}

We can now formulate several consequences for the transition diagram of a zero-sum game $(A,B)$:

\begin{lemma} \label{lem:zs_impl}
Let $(A,B)$ be zero-sum and satisfy Assumptions \ref{as:non_deg} and \ref{as:int_ne}. Then:
\begin{enumerate}
\item The transition diagram does not have alternating cycles. 
\item The transition diagram does not have sinks.
\item The transition diagram does not have sources. 
\end{enumerate}
\end{lemma}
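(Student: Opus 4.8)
The plan is to translate the purely combinatorial statements about arrows into arithmetic inequalities among the entries of the payoff matrix, and then read off each of the three conclusions from an order/monotonicity argument. First I would fix a zero-sum representative: by linear equivalence I may assume $B=-A$ without changing the best-response correspondences, and hence without changing the transition diagram. With this normalisation the rule stated just after Figure~\ref{fig:markov} becomes: a vertical arrow $(i,j)\to(i',j)$ is present iff $a_{i'j}>a_{ij}$, and a horizontal arrow $(i,j)\to(i,j')$ is present iff $a_{ij'}<a_{ij}$ (strictness from Assumption~\ref{as:non_deg}). In words, vertical arrows point toward the larger entry within a column, while horizontal arrows point toward the smaller entry within a row. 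This sign bookkeeping is the step to be done carefully, since the two families of arrows behave oppositely with respect to $A$.

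For part (1) I would use the scalar $a_{i_k j_k}$ as a potential along a putative alternating cycle $(i_0,j_0),\dots,(i_n,j_n)=(i_0,j_0)$. Consider the first type in Definition~\ref{def:alt_cycle}, where vertical steps keep their direction and horizontal steps are reversed. On a vertical step the forward direction is a genuine arrow, so $a$ increases; on a horizontal step the reversed arrow is the genuine one, so again the forward direction increases $a$. Hence $a_{i_0j_0}<a_{i_1j_1}<\dots<a_{i_nj_n}=a_{i_0j_0}$, a contradiction; the second type is identical with all inequalities reversed. The one thing to verify is precisely that both kinds of steps move $a$ in the same direction, which is exactly what the opposite conventions for horizontal and vertical arrows arrange.

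For part (2), a sink $(i,j)$ has both vertical arrows pointing in, forcing $a_{ij}=\max_{i'}a_{i'j}$, and both horizontal arrows pointing in, forcing $a_{ij}=\min_{j'}a_{ij'}$. Thus $a_{ij}$ is a saddle point of $A$, and a short check shows $e_i\in\BR_A(e_j)$ and $e_j\in\BR_B(e_i)$, so $(e_i,e_j)$ is a pure Nash Equilibrium in the sense of Definition~\ref{def:nash_eq}. Being a vertex of $\Sigma$, it is distinct from the interior equilibrium, contradicting the uniqueness in Assumption~\ref{as:int_ne}.

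Finally, for part (3) I would invoke a reversal duality rather than repeat the saddle analysis. Reversing every arrow of the diagram of $(A,-A)$ yields exactly the diagram of the game $(-A,A)$: indeed, writing $\tilde A=-A$, the inequality $\tilde a_{i'j}>\tilde a_{ij}$ is equivalent to $a_{i'j}<a_{ij}$, and similarly for rows, so every arrow flips. The game $(-A,A)$ is again zero-sum and satisfies Assumptions~\ref{as:non_deg} and~\ref{as:int_ne}: non-degeneracy is preserved under $A\mapsto -A$, and the interior equilibrium is unchanged since $(AE^B)_i$ are all equal iff $(-AE^B)_i$ are all equal. Under this reversal a source of $(A,-A)$ becomes a sink of $(-A,A)$, so the absence of sinks from part (2), applied to $(-A,A)$, yields the absence of sources. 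The main obstacle throughout is not any deep estimate but keeping the arrow-direction conventions consistent; once the inequalities are set up correctly, each part reduces to a one-line order argument.
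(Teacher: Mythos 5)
Your proof is correct, but parts (2) and (3) take a genuinely different route from the paper. Part (1) is essentially identical: both arguments normalise to $B=-A$ and run the potential $a_{i_kj_k}$ monotonically around the cycle. For part (2) the paper instead invokes the Brown--Robinson convergence theorem (Theorem \ref{thm:zs_conv}): a sink traps orbits in a single region $R_{ij}$, where they move in straight lines toward a vertex of $\partial\Sigma$, contradicting convergence to the interior equilibrium. Your saddle-point argument --- the sink entry is a column maximum of $A$ and a row minimum, hence $(e_i,e_j)$ is a pure Nash equilibrium violating the uniqueness in Assumption \ref{as:int_ne} --- is more elementary (it uses no dynamics at all) and is sound; note it is in the same spirit as the paper's own treatment of Case 3 of part (3), where a boundary equilibrium $(N,M)$ is produced. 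For part (3) the divergence is larger: the paper carries out a four-case combinatorial analysis (Figures \ref{fig:diag_source} and \ref{fig:diag_source2}), whereas your negation duality $(A,-A)\mapsto(-A,A)$ reverses every arrow, turns sources into sinks, preserves zero-sum, Assumption \ref{as:non_deg}, and (via the equivalent characterisation in Assumption \ref{as:int_ne}, which is manifestly invariant under $A\mapsto -A$) the unique interior equilibrium, so part (2) applies directly. This is a substantial shortening; what the paper's longer case analysis buys in exchange is the explicit extra information recorded in its parenthetical remark, namely that in the critical configuration there are trajectories spiralling out to the boundary equilibrium, so the interior equilibrium is not even stable --- information that is invisible to the purely order-theoretic argument.
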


\begin{remark}
One can see that in (1) without loss of generality we can only consider cycles in which the $i$- and $j$-component change alternatingly, which justifies the notion of alternating cycle. In fact in $3\times 3$ games (1) reduces to saying that there are no alternating cycles of the two kinds depicted in Fig.\ref{fig:diag_intro}(d) and (e). 
\end{remark}

\begin{figure}
\begin{center}
\includegraphics[scale=0.9]{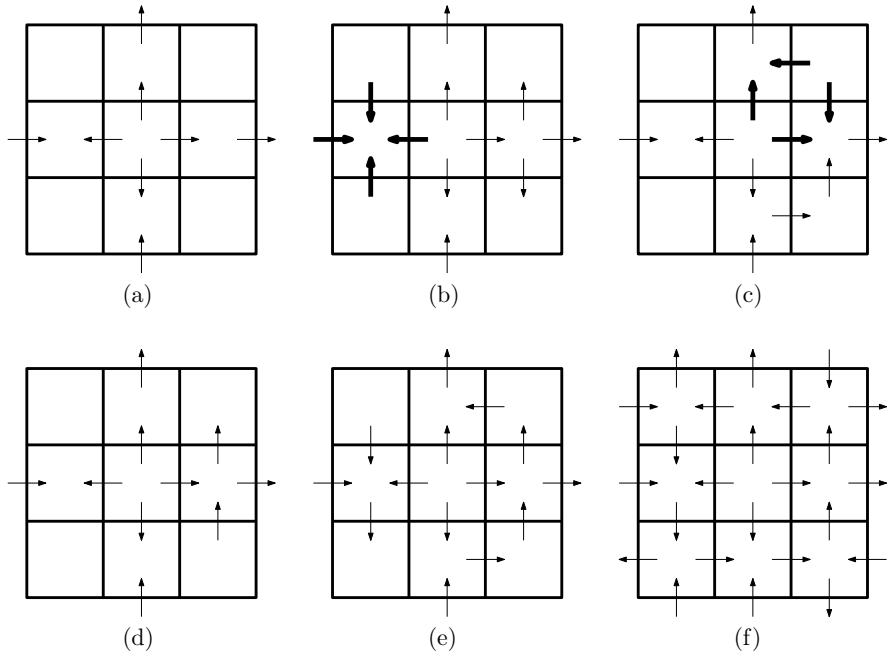}
\end{center}
\caption{Proof of statement (3) of Lemma \ref{lem:zs_impl}: 
\newline (a) General case for a diagram with a source  
\newline (b) Case 1: sink in (2,1) contradicts zero-sum  
\newline (c) Case 2: alternating cycle contradicts zero-sum  
\newline (d) Case 3
\newline (e),(f) Case 3: necessarily following configuration}\label{fig:diag_source}
\end{figure}

\begin{figure}
\begin{center}
\includegraphics{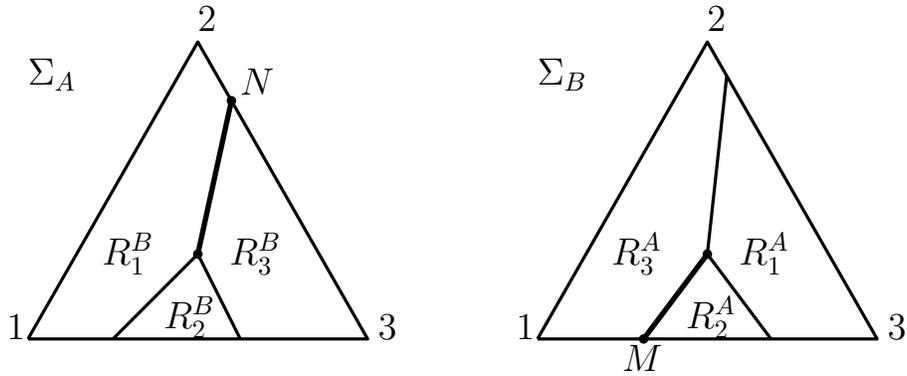}
\end{center}
\caption{Proof of statement (3) of Lemma \ref{lem:zs_impl}:
\newline This configuration necessarily follows from the transition diagram in Case 3 of the proof. $(N,M)$ is then a Nash Equilibrium, contradicting Assumption \ref{as:int_ne}.}
\label{fig:diag_source2}
\end{figure}

\begin{proof}
For statement (1) assume that $A+B = 0$ (otherwise choose linearly equivalent matrices such that this holds and note that this does not change the arrows in the transition diagram). Now note first that $(i,j) \rightarrow (i',j)$ iff $a_{i'j} > a_{ij}$. Further, $(i,j) \rightarrow (i,j')$ iff $b_{ij'} > b_{ij}$, which is equivalent to $a_{ij} > a_{ij'}$. It follows that an alternating cycle leads to a chain of inequalities $a_{i_0j_0} > a_{i_1j_1} > \ldots > a_{i_nj_n} = a_{i_0j_0}$ and therefore cannot exist. 

To prove statement (2), we use Theorem \ref{thm:zs_conv}. It follows from this theorem and Assumption \ref{as:int_ne} that orbits of (\ref{eq:br_dynamics}) converge to a single isolated point in the interior of $\Sigma$. A sink in the transition diagram however would imply that orbits of (\ref{eq:br_dynamics}) that start in $R_{ij}$ (for some $i,j$) stay in it for all times. Since orbits do not change direction while in $R_{ij}$, this can only be the case if they converge (in straight line segments) towards a vertex on $\partial \Sigma$. 

At last, to show statement (3) of the lemma let us assume for a contradiction that such a zero-sum game satisfying the assumptions and with a source in its transition diagram exists. After possibly permuting rows and columns and swapping the roles of the two players we can assume that the source is $(2,2)$ and we have the (incomplete) diagram as seen in Fig.\ref{fig:diag_source}(a). Let us now consider all four possible cases for the vertical arrows in $(2,3)$:

\begin{itemize}
\item Case 1: $(2,3) \rightarrow (i,3),~i=1,2$, i.e. both arrows pointing \textit{out of} $(2,3)$:
\newline Either row 2 dominates row 1 or 3, or $(2,1)$ is a sink, see Fig.\ref{fig:diag_source}(b).

\item Case 2: $(i,3) \rightarrow (2,3),~i=1,2$, i.e. both arrows pointing \textit{into} $(2,3)$:
\newline Either column 2 dominates column 3 or there is an alternating cycle, see Fig.\ref{fig:diag_source}(c). 

\item Case 3: $(3,3) \rightarrow (2,3)$, $(2,3) \rightarrow (1,3)$, both arrows point \textit{upward} (Fig.\ref{fig:diag_source}(d)):
\newline In order to avoid an alternating cycle and a dominated column, one necessarily has $(3,2)\rightarrow(3,3)$ and $(1,3)\rightarrow (1,2)$. Further, since row 2 may not dominate row 1 and alternating cycles cannot happen in a zero-sum game, one gets $(1,1) \rightarrow (2,1)$ and $(1,2) \rightarrow (1,1)$, also $(2,1) \rightarrow (3,1)$ is necessary to avoid a source in $(2,1)$, see Fig.\ref{fig:diag_source}(e). With some further deductions of the same kind one can show that the only possible transition diagram is the one shown in Fig.\ref{fig:diag_source}(f).
We can now deduce that $\Sigma_A$ and $\Sigma_B$ are partitioned into the regions $R_i^A$ and $R_j^B$ as shown in Fig.\ref{fig:diag_source2}. Consider the point on $\partial \Sigma$ denoted by $(N,M)$ and note that $\BR_A(M)$ cotains $e_2$ and $e_3$, hence all their convex combinations. Therefore $N \in \BR_A(M)$. Analogously, $M \in \BR_B(N)$. Therefore $(N,M)$ is a Nash Equilibrium contradicting our assumption that the interior Nash Equilibrium is unique. 

(In fact it also follows from this configuration that there exist initial conditions arbitrarily close to the interior Nash Equilibrium whose trajectories spiral off towards $(N,M)$ and therefore the interior Nash Equilibrium cannot be stable for the dynamics.)

\item Case 4: $(1,3) \rightarrow (2,3)$ and $(2,3) \rightarrow (3,3)$, i.e. both arrows point \textit{downward}:
\newline This case is analogous to the previous one.
\end{itemize}

To conclude, we have shown that a source in the diagram contradicts our assumption of a zero-sum game with unique interior Nash Equilibrium, which finishes the proof of statement (3).

\end{proof}

\section{Main Result}

The next hope is of course to get a full characterization of all combinatorial configurations that can be realised by zero-sum games. This indeed can be done after defining a suitable notion of combinatorially equivalent games. 

\begin{definition}\label{def:comb_equiv}
We call two bimatrix games $(A,B)$ and $(C,D)$ \textbf{combinatorially identical}, if they induce the same transition relation, i.e. $(i,j)\rightarrow (i',j')$ for $(A,B)$ iff $(i,j)\rightarrow (i',j')$ for $(C,D)$.

We call two bimatrix games $(A,B)$ and $(C,D)$ \textbf{combinatorially equivalent}, if there exist permutation matrices $P, Q$ such that $(A,B)$ and $(PCQ, PDQ)$ are combinatorially identical or $(B',A')$ and $(PCQ, PDQ)$ are combinatorially identical.
\end{definition}

The definition expresses the idea, that games are combinatorially equivalent if they have the same transition diagram up to permutation of rows and columns and transposition. The main result is the following:

\begin{theorem}[Combinatorial classification of transition diagrams for zero-sum games] \label{thm:main}
The types of transition diagram (combinatorial equivalence classes) that can be realised by a zero-sum game satisfying Assumptions \ref{as:non_deg} and \ref{as:int_ne} are precisely all those that satisfy the following (combinatorial) conditions:
\begin{enumerate}
\item No row of the diagram has three horizonal arrows pointing in the same direction and no column has three vertical arrows pointing in the same direction. 
\item No three horizontal arrows between two columns point in the same direction and no three vertical arrows between two rows point in the same direction.
\item The diagram has no sinks.
\item The diagram has no sources.
\item The diagram has no alternating cycles. 
\end{enumerate}
This gives precisely 23 different types of transition diagrams (up to combinatorial equivalence). These are listed in Appendix \ref{ap:types}. \footnote{Coincidentally (or not?) the number 23 is the most sacred number for the religion called 'Discordianism'. In this religion 23 is the number of the highest deity, Eris, who is the Greek goddess of Chaos.}
\end{theorem}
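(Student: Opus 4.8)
The plan is to prove the two inclusions separately and then to carry out the enumeration. For the necessity direction there is little left to do: condition (1) is the elementary observation made just before Assumption \ref{as:dom_strat}; condition (2) is Assumption \ref{as:dom_strat}, which by the Lemma preceding Definition \ref{def:alt_cycle} is forced by Assumption \ref{as:int_ne}; and conditions (3), (4), (5) are exactly parts (2), (3), (1) of Lemma \ref{lem:zs_impl}. Thus every zero-sum game satisfying Assumptions \ref{as:non_deg} and \ref{as:int_ne} has a diagram obeying (1)--(5), and only the converse and the count of $23$ require genuine work.

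For sufficiency I would first realise the ordinal data. Normalising to $B=-A$, the arrows are encoded by $a_{i'j}>a_{ij}$ for a vertical arrow $(i,j)\to(i',j)$ and by $a_{ij}>a_{ij'}$ for a horizontal arrow $(i,j)\to(i,j')$. Build a directed graph $\Gamma$ on the nine cells, placing an edge from the smaller to the larger cell for each such forced comparison; a realising matrix $A$ exists precisely when $\Gamma$ is acyclic, in which case any topological order supplies admissible values $a_{ij}$. Condition (1) says that inside each single row and each single column the three forced comparisons form a strict total order, so a shortest directed cycle of $\Gamma$ can never take two consecutive steps within one line (such a pair could be short-circuited). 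Hence a shortest cycle alternates row- and column-steps, i.e. is an alternating cycle in the sense of Definition \ref{def:alt_cycle}; condition (5) rules these out, so $\Gamma$ is acyclic and the diagram is realised by some zero-sum $A$.

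It remains to upgrade this $A$ so that Assumption \ref{as:int_ne} holds, and this is where the main difficulty lies, because interiorness of the equilibrium depends on the magnitudes of the $a_{ij}$ and not merely on the diagram. The interior equilibrium, if present, has $q$-component $E^B\in\mathring\Sigma_B$ solving $(AE^B)_1=(AE^B)_2=(AE^B)_3$, and symmetrically for $E^A$; interiorness means these unique solutions are strictly positive. The combinatorial obstructions are exactly conditions (2)--(4): a sink is a cell that is maximal in its column and minimal in its row, i.e. a saddle point of $A$ and hence a pure (boundary) equilibrium; a source is the reverse; and a dominated row or column collapses the game to a $2\times 3$ game whose equilibrium sits on $\partial\Sigma$. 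The task is therefore to show that, conversely, whenever (2)--(4) hold one may choose the magnitudes compatibly with $\Gamma$ so that the centering systems for $E^A$ and $E^B$ have strictly positive solutions. I would attack this either uniformly --- arguing that no domination keeps all three best-response regions $R^A_i$ (resp. $R^B_j$) nonempty while no sink, no source and no alternating cycle force them to be cyclically arranged around a common interior vertex, which is then the interior equilibrium --- or, failing a clean uniform argument, by exhibiting an explicit zero-sum matrix for each admissible diagram and checking interiorness directly. I expect this interior-Nash step to be the real obstacle.

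Finally, for the count of $23$ I would enumerate all arrow diagrams satisfying (1)--(5) and factor by the combinatorial-equivalence group of Definition \ref{def:comb_equiv}, which is generated by simultaneous row permutations, column permutations and the transpose-swap $(A,B)\mapsto(B',A')$ and has order $6\cdot 6\cdot 2=72$. Each row and each column may carry any of the $6$ orientations allowed by condition (1), and imposing (2)--(5) prunes this finite list; a careful orbit count (by hand or machine-assisted) should then yield exactly the $23$ classes tabulated in Appendix \ref{ap:types}. Beyond the interior-Nash step, the only delicate point here is the bookkeeping of the $72$-element action, to be sure that no two listed diagrams are equivalent and none is missed.
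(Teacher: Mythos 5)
Your necessity argument is identical to the paper's: condition (1) is the elementary cyclic-inequality observation, (2) is Assumption \ref{as:dom_strat} via the lemma deriving it from Assumption \ref{as:int_ne}, and (3)--(5) are exactly Lemma \ref{lem:zs_impl}. For sufficiency the two arguments genuinely diverge in organization, though they end in the same place. The paper never proves an abstract realizability statement like your acyclicity criterion for the comparison graph $\Gamma$; it first carries out the combinatorial enumeration and then simply exhibits, in Appendix \ref{ap:types}, an explicit zero-sum matrix for each of the 23 classes with the interior equilibrium checked directly --- i.e.\ it goes straight to your declared fallback. Your $\Gamma$-argument is a nice structural addition (and the short-circuiting step is sound, since condition (1) makes the within-row and within-column comparisons total orders, so a shortest directed cycle must alternate row- and column-steps and is then killed by condition (5)), but as you yourself note it only produces \emph{some} zero-sum game with the prescribed diagram, not one satisfying Assumption \ref{as:int_ne}; your proposed ``uniform'' argument for interiorness is the one step I would not trust as written --- the claim that no sink, no source and no alternating cycle force the best-response regions to be cyclically arranged around a common interior vertex is precisely the content that would need proof, and the paper does not attempt it, which is why the explicit matrices are needed. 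For the count of 23 the paper does not quotient a raw enumeration by the order-$72$ group; it introduces the notion of a \emph{short loop} (Definition \ref{def:short_loop}), proves via Lemmas \ref{lem:comb0}--\ref{lem:num_sloops} that every admissible diagram contains between three and six of them, classifies the possible placements of the short loops up to equivalence, and counts $2+15+5+1=23$ diagrams across the cases. This buys a structured, humanly verifiable hand computation in place of your machine-assisted orbit count; both are finite checks and either would do.
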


Throughout the proof of the theorem we will make use of the following notion:
\begin{definition}\label{def:short_loop}
We call an oriented loop of length four (formed by the arrows in the diagram) a \textbf{short loop}, see Fig.\ref{fig:main_proof}(a). A short loop always has the form $(i,j) \rightarrow (i',j) \rightarrow (i',j') \rightarrow (i,j') \rightarrow (i,j)$ and we indicate the vertex in the diagram encircled by such loop by a $\bullet$.\footnote{It can be checked that a short loop precisely corresponds to those $2\times 2$ subgames, which are linearly equivalent to a zero-sum game.}
\end{definition}

\begin{proof}
By the above discussion we know that (1) is true for any transition diagram of a game and (2) corresponds to Assumption \ref{as:dom_strat} (which is implied by Assumption \ref{as:int_ne}), so the only conditions that are left to check are (3)-(5). By Lemma \ref{lem:zs_impl}, we already know that (3)-(5) are necessary conditions for a diagram to be realisable by a zero-sum game.

To show that (1)-(5) are also sufficient, we will proceed in two steps: we will show that combinatorially these conditions give rise to precisely 23 types of diagrams (up to permutation of rows and columns and transposition) and then we will give examples of zero-sum games realising these types. Because of the initially large number of possible transition diagrams, we will group them by the number of short loops contained in them. 

\begin{figure}
\begin{center}
\includegraphics[scale=0.9]{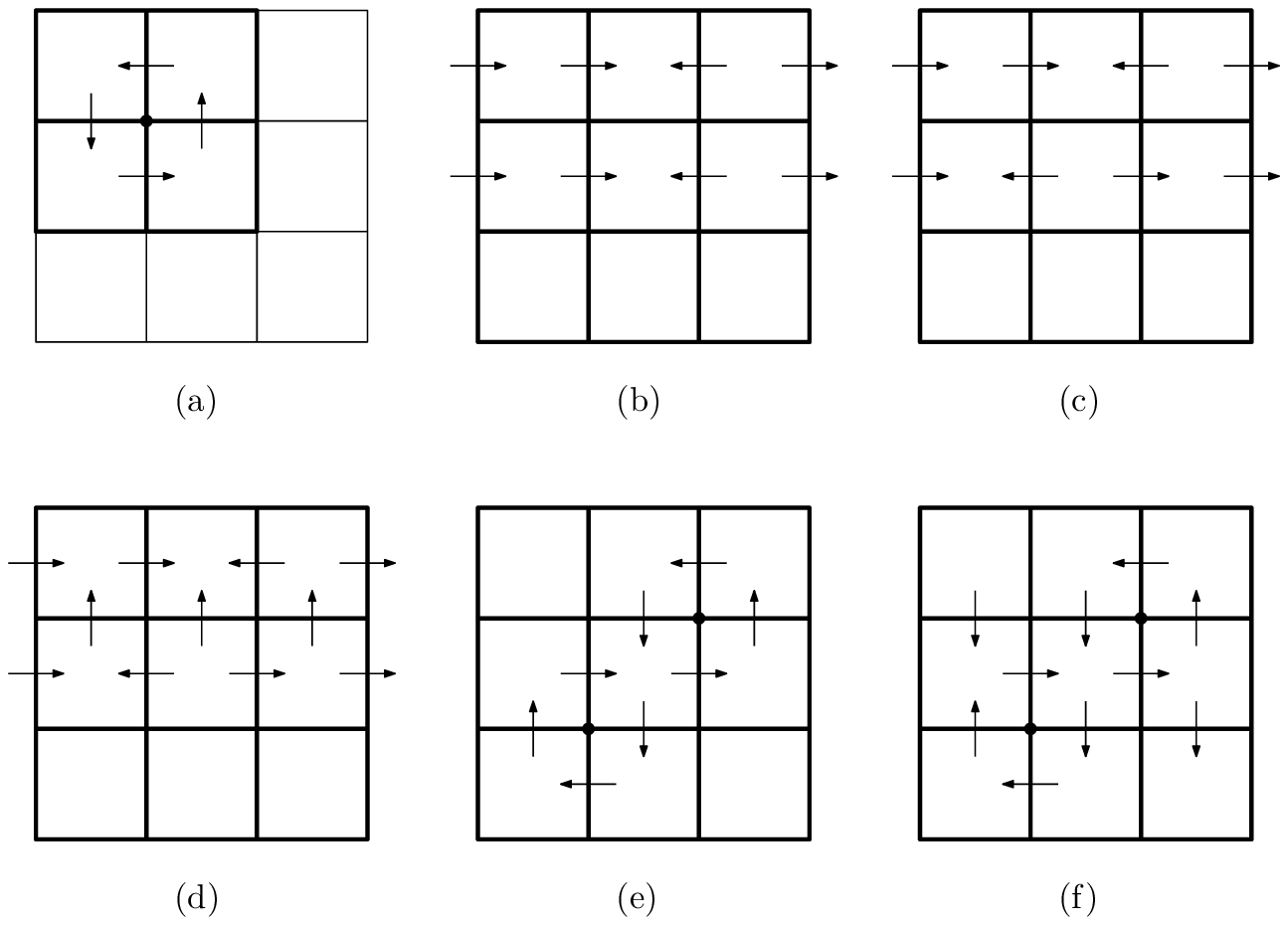}
\end{center}
\caption{Proof of Theorem \ref{thm:main}: 
\newline (a) a short loop  
\newline (b) two rows coinciding at all positions 
\newline (c) two rows coinciding at one and differing at two positions 
\newline (d) Lemma \ref{lem:rc_lem}(1): if two rows differ at two positions, then either there is a short loop or one row dominates the other
\newline (e) Lemma \ref{lem:num_sloops}: two short loops not between the same rows/columns 
\newline (f) Lemma \ref{lem:rc_lem}(2) applied to columns 1,2 and 2,3 in previous diagram}
\label{fig:main_proof}
\end{figure}

Let us introduce the notion of rows (or columns) \textit{coinciding or differing at a position}. Two rows, say $i$ and $i'$, coincide at a position, say between columns $j$ and $j'$, if $(i,j) \rightarrow (i,j')$ if and only if $(i',j) \rightarrow (i',j')$, and they differ at this position otherwise. E.g. in Fig.\ref{fig:main_proof}(b) rows 1 and 2 coincide at all positions, whereas in Fig.\ref{fig:main_proof}(c) they coincide at one and differ at two positions. 

We now introduce a few very helpful lemmas about the transition diagrams of zero-sum games satisfying our assumptions:

\begin{lemma} \label{lem:comb0}
Two columns (or rows) can have at most two short loops between them.
\end{lemma}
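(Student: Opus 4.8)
The plan is to reduce everything to the three numbers that record the directions of the horizontal arrows in the two chosen columns, and then apply a pigeonhole argument. First I would pass to a linearly equivalent representative with $A+B=0$, which (as in the proof of Lemma~\ref{lem:zs_impl}) changes none of the arrows in the transition diagram. Fixing two columns $j\neq j'$, recall that the horizontal arrow in row $k$ between these columns satisfies $(k,j)\rightarrow(k,j')$ iff $b_{kj'}>b_{kj}$, i.e. iff $a_{kj}>a_{kj'}$. I would therefore set $d_k:=a_{kj}-a_{kj'}$ for $k=1,2,3$; by Assumption~\ref{as:non_deg} each $d_k\neq 0$, and the sign of $d_k$ encodes the direction of the row-$k$ horizontal arrow.

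The key step is a purely combinatorial observation about short loops. A short loop between rows $i,i'$ (at columns $j,j'$) is, by Definition~\ref{def:short_loop}, a consistently oriented $4$-cycle around the square with corners $(i,j),(i',j),(i,j'),(i',j')$. In any oriented $4$-cycle the two opposite horizontal edges — here the row-$i$ and row-$i'$ arrows — must point in opposite senses: if both pointed from column $j$ toward column $j'$, then the shared endpoint in column $j'$ would receive two incoming arrows and no oriented loop could close up. Hence a short loop between rows $i$ and $i'$ forces $d_i$ and $d_{i'}$ to have opposite signs.

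Finally I would invoke the pigeonhole principle: among the three nonzero reals $d_1,d_2,d_3$ at least two share the same sign, and by the previous step that pair of rows cannot bound a short loop. Thus at most two of the three row-pairs yield a short loop between columns $j$ and $j'$. The statement for two rows is identical after transposing, or equivalently by applying the argument to the vertical arrows in two fixed rows, whose directions are governed by the signs of $a_{i'k}-a_{ik}$ as $k$ ranges over the three columns. I expect the only delicate point to be the combinatorial claim that an oriented $4$-cycle forces its two parallel horizontal edges to be antiparallel; once this \emph{antiparallel} necessary condition is pinned down, the sign count is immediate, and Assumption~\ref{as:non_deg} is exactly what guarantees that the three signs are well defined.
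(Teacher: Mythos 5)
Your argument is correct. Note first that the paper actually states Lemma~\ref{lem:comb0} without proof (it is treated as evident), so there is no ``official'' argument to compare against; your pigeonhole proof is a perfectly valid way to fill the gap, and it is surely the intended one. Two small remarks. First, the ``delicate point'' you flag --- that an oriented $4$-cycle forces its two horizontal edges to be antiparallel --- is immediate from the normal form given in Definition~\ref{def:short_loop}: a short loop is $(i,j) \rightarrow (i',j) \rightarrow (i',j') \rightarrow (i,j') \rightarrow (i,j)$, whose horizontal arrows $(i',j)\rightarrow(i',j')$ and $(i,j')\rightarrow(i,j)$ visibly point in opposite senses; your in-degree argument is also fine, although the two horizontal edges do not literally share an endpoint --- the correct statement is that if both pointed into column $j'$, then the head of the vertical edge inside column $j'$ would have in-degree two, which is impossible in a directed $4$-cycle. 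Second, the reduction to $A+B=0$ and the quantities $d_k=a_{kj}-a_{kj'}$ is harmless but unnecessary: all you need is that, by Assumption~\ref{as:non_deg}, each of the three horizontal arrows between columns $j$ and $j'$ has a single well-defined direction, so among three arrows with two possible directions some two agree and that pair of rows cannot bound a short loop. In particular the lemma is purely combinatorial and does not use the zero-sum hypothesis at all, which slightly strengthens the statement relative to the context in which the paper invokes it.
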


\begin{lemma} \label{lem:rc_lem}
\begin{enumerate}
\item If two rows (columns) differ at two positions, then there is a short loop between these rows (columns).
\item If two rows (columns) differ at all three positions, then there are precisely two short loops between them.
\item If two rows (columns) coincide at two positions, then there is a short loop between each of these rows (columns) and the third row (column). In particular the diagram has at least two short loop.
\item If two rows (columns) coincide at all three positions, then there are precisely two short loops between each of these rows (columns) and the third row (column). Then the diagram has precisely four short loops.
\end{enumerate}
\end{lemma}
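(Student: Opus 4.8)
The plan is to reduce everything to a single combinatorial criterion for when a short loop occurs, and then to read off (1)--(4) by counting.

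First I would normalise to $A+B=0$ (permissible, since passing to a linearly equivalent zero-sum representative changes none of the arrows), so that every arrow is governed by $A$ alone: a vertical arrow satisfies $(i,j)\to(i',j)$ iff $a_{i'j}>a_{ij}$, while a horizontal arrow satisfies $(i,j)\to(i,j')$ iff $a_{ij}>a_{ij'}$ (recall $b_{ij'}>b_{ij}\Leftrightarrow a_{ij}>a_{ij'}$). Restricting to two rows $i,i'$ and a pair of columns $\{j,j'\}$, the four entries $a_{ij},a_{ij'},a_{i'j},a_{i'j'}$ form a $2\times 2$ block, and I would establish the \emph{short-loop criterion}: there is a short loop on this block iff the two largest of the four entries sit in diagonally opposite cells; equivalently, iff the rows \emph{differ} at the position $\{j,j'\}$ (the two horizontal arrows are anti-parallel) \emph{and} the two vertical arrows in columns $j,j'$ point to opposite rows. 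The verification is a short case check: a directed $4$-cycle forces the horizontal edges to be anti-parallel, so coincidence of the rows at a position rules out a short loop there; and once the rows differ, exactly one of the admissible vertical sign-patterns closes the cycle (the remaining anti-parallel pattern being incompatible with the fixed within-row orders). I would also record the dictionary between the standing hypotheses and sign data: writing $\sigma_j=\mathrm{sign}(a_{ij}-a_{i'j})$, ``no dominated row between $i,i'$'' means $\sigma_1,\sigma_2,\sigma_3$ are not all equal, and dually ``no dominated column'' means the three rows never all agree on a single column-pair. The column versions of (1)--(4) then follow by the symmetric argument (transpose, swapping the roles of the two players).

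With this criterion, (1) and (2) are immediate. For (1), if two rows differ at two positions, those two column-pairs necessarily share a common column $c$, say $\{c,a\}$ and $\{c,b\}$; were there no short loop, the criterion would force $\sigma_c=\sigma_a$ and $\sigma_c=\sigma_b$, hence $\sigma_a=\sigma_b=\sigma_c$, i.e.\ all three vertical arrows parallel --- a dominated row, excluded by Assumption~\ref{as:int_ne} (via Assumption~\ref{as:dom_strat}). For (2), if the rows differ at all three positions then short loops correspond exactly to column-pairs on which the two vertical arrows disagree; since $(\sigma_1,\sigma_2,\sigma_3)$ is non-constant it splits two-against-one, producing exactly $2\cdot 1=2$ such pairs, hence precisely two short loops.

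Parts (3) and (4) I would derive from the dual (column) domination constraint together with (1) and (2). The key remark is that if all three rows agreed on some column-pair, the three horizontal arrows there would be parallel, i.e.\ a dominated column, again excluded. Hence at every column-pair on which rows $i$ and $i'$ coincide, the third row $i''$ is forced to disagree with both. In case (3), $i,i'$ coincide at two positions, so $i''$ differs from each of $i,i'$ at (at least) those two positions, and part (1) yields a short loop between $i''$ and $i$ and between $i''$ and $i'$; in particular at least two short loops in all. In case (4), $i,i'$ coincide at all three positions, so $i''$ must disagree with them everywhere, i.e.\ $i''$ carries the reverse order and differs from each of $i,i'$ at all three positions; part (2) then gives exactly two short loops between $i''$ and $i$ and exactly two between $i''$ and $i'$, while $i$ and $i'$ (coinciding everywhere) support none, for a total of exactly four --- consistent with Lemma~\ref{lem:comb0}, which caps each row-pair at two.

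The main obstacle is the short-loop criterion of the first paragraph: the counting in (1)--(4) is routine once it is in place, but the criterion must be proved carefully, in particular checking that the only vertical configurations that actually occur at a differing position are the two ``parallel'' ones and the single cycle-closing ``anti-parallel'' one (the fourth sign-pattern being excluded by the within-row orderings), and that the coinciding-position case never produces a loop. A secondary point to get right is that the two domination hypotheses enter asymmetrically --- row-domination drives (1)--(2) through the vertical sign vector, whereas column-domination is what forces the third row's order in (3)--(4) --- so both must be invoked, and I would make sure the reverse-order conclusion in (4) is genuinely forced rather than merely consistent.
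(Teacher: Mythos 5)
Your proof is correct and follows essentially the same route as the paper's: the same $2\times 2$-block trichotomy (parallel arrows / alternating cycle / short loop), with alternating cycles excluded by the zero-sum ordering of the entries of $A$ and parallel arrows excluded by the no-domination hypothesis, followed by the same counting and the same reduction of (3)--(4) to (1)--(2) via the third row. Your sign-vector bookkeeping $\sigma_j$ just makes explicit the "shared column" step that the paper leaves implicit.
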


\begin{proof}
For (1), note that every $2 \times 2$ block obtained by deleting one row and one column from a transition diagram has got either two arrows pointing in the same direction or contains and alternating cycle or a short loop. Assume that two rows (columns) differ at two positions (Fig.\ref{fig:main_proof}(c)). Since we don't allow alternating cycles, the only way a short loop between the two rows (columns) can be avoided is by having all arrows between them pointing in the same direction (Fig.\ref{fig:main_proof}(d)). But this case is ruled out by hypothesis (2) of the theorem. Hence there is a short loop between them. 

Essentially the same argument shows that statement (2) of the lemma holds. 

If two rows (columns) coincide at two positions, then since no column (row) is allowed to be dominated, each of these rows differs at two positions from the third row (column). Statement (3) then follows from statement (1). 

The same argument proves statement (4). The fact that the diagram then has precisely four short loops follows from Lemma \ref{lem:comb0} and the fact that there cannot be any short loops between the two rows (columns) that coincide at all positions.
\end{proof}

We can now proceed to grouping all possible transition diagrams by the number of short loops contained in them: 

\begin{lemma} \label{lem:num_sloops}
The transition diagram of a game as in Theorem \ref{thm:main} can only have between three and six short loops.
\end{lemma}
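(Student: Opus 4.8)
The plan is to bound the total number of short loops from both sides using the two structural lemmas just proved. The key preliminary observation is that every short loop is determined by exactly one pair of rows together with one pair of columns, so the total count splits as $S = s_{12}+s_{13}+s_{23}$, where $s_{ab}$ denotes the number of short loops between rows $a$ and $b$ (varying the column pair). The upper bound $S\le 6$ is then immediate from Lemma \ref{lem:comb0}: there are three pairs of rows, and between any two rows there are at most two short loops, so $S\le 3\cdot 2 = 6$.

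For the lower bound $S\ge 3$ I would run a short case analysis on how the three rows pairwise agree. Encode each row by the linear order its horizontal arrows impose on the columns (a genuine order by condition (1) of the theorem); two rows then ``coincide at $k$ positions'' exactly when their orders agree on $k$ of the three column-pairs, and the number of positions at which they differ is the number of discordant pairs. First I would dispose of the degenerate case where all three rows carry the same order: this forces all three horizontal arrows between every pair of columns to point the same way, i.e. a dominated column, which is excluded by condition (2) of the theorem. In each remaining case I would feed the pairwise agreement data into Lemma \ref{lem:rc_lem}: a pair differing at all three positions contributes exactly two short loops (part 2); a pair differing at exactly two positions contributes at least one (part 1); a pair coinciding at exactly two positions forces, by part 3, a short loop between each of those two rows and the third row; and a pair of identical rows forces, by part 4, two short loops with the third row on each side, so that $S=4$.

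To be sure the case list is complete I would identify which unordered triples of pairwise difference counts can actually occur. The difference count between two orders is their number of discordant pairs, a value in $\{0,1,2,3\}$; since the six linear orders sit at the vertices of a hexagon under adjacent transpositions, these counts are just graph distances on that hexagon. A brief inspection shows the only possible triples are $\{0,0,0\}$, $\{0,d,d\}$, $\{1,1,2\}$, $\{1,2,3\}$ and $\{2,2,2\}$. Discarding $\{0,0,0\}$ as above, the remaining four all give $S\ge 3$: for $\{0,d,d\}$ part 4 gives $S=4$; for $\{2,2,2\}$ part 1 gives each $s_{ab}\ge 1$; for $\{1,2,3\}$ parts 2 and 1 already give $2+1$; and for $\{1,1,2\}$ applying part 3 to the two coinciding pairs forces $s_{12},s_{13},s_{23}\ge 1$.

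The only slightly delicate point is the bookkeeping in this last step: one must take care to apply parts (1)--(4) of Lemma \ref{lem:rc_lem} to the correct pair of rows and to the correct ``third'' row, and to verify that the forced short loops really cover all three row-pairs (rather than accidentally the same pair twice). I expect this to be the main obstacle, but it is a matter of careful accounting rather than a new idea; once it is checked, the two-sided bound $3\le S\le 6$ follows at once.
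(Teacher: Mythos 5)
Your proof is correct, and your lower bound argument takes a genuinely different route from the paper's. The paper excludes $0$, $1$ and exactly $2$ short loops in turn; the first two exclusions are quick, but ruling out exactly two short loops requires splitting on whether the two loops lie between the same pair of rows or columns and on their relative orientations, followed by explicit diagram chasing (Fig.~\ref{fig:main_proof}(e)--(f)) to force a third loop. You instead encode each row as the linear order it induces on the columns (legitimate by condition (1) of Theorem~\ref{thm:main}), note that the number of differing positions between two rows is the Kendall tau distance between their orders (graph distance on the hexagonal Cayley graph of $S_3$), and classify the five possible multisets of pairwise distances; in each case Lemma~\ref{lem:rc_lem} yields $S\ge 3$ directly, with no orientation analysis at all. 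This buys a shorter and more systematic argument, at the cost of introducing the order-theoretic encoding. Two small remarks. First, your case $\{1,1,2\}$ is in fact vacuous: if rows $1,2$ coincide at two positions and rows $1,3$ coincide at two positions, then by inclusion--exclusion all three rows agree at some position, giving a dominated column contrary to condition (2); your appeal to Lemma~\ref{lem:rc_lem}(3) there is formally valid but describes an empty set of diagrams (similarly $\{0,d,d\}$ only occurs with $d=3$). Second, your upper bound via the decomposition $S=s_{12}+s_{13}+s_{23}$ and Lemma~\ref{lem:comb0} is exactly the paper's, so only the lower bound is genuinely new.
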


\begin{proof}
Note first that for any pair of rows (or columns) of a transition diagram at least one of the cases of Lemma \ref{lem:rc_lem} applies and we can make the following list of cases for two rows (columns), say $i$ and $j$:
\begin{itemize}
\item $i$ and $j$ coincide at 0 positions, then they have precisely 2 short loops between them.
\item $i$ and $j$ coincide at 1 position, then they have 1 or 2 short loops between them.
\item $i$ and $j$ coincide at 2 positions, then there is at most 1 short loop between them, and there is at least 1 short loop between each of them and the third row (column).
\item $i$ and $j$ coincide at 3 positions, then they have no short loops between them, and there are precisely 2 short loops between each of them and the third row (column).
\end{itemize}
It is clear from these that there cannot be a diagram without any short loops: pick any two rows and whichever of the above cases applies, it follows that the diagram has at least one short loop. 

Similarly, the transition diagram cannot have precisely one short loop. Assume for contradiction that (without loss of generality) there is a single short loop between rows 1 and 2. Now consider rows 2 and 3, which do not have a short loop between them. Then they must have at least 2 coinciding positions. But having 2 or more coinciding positions implies that there is also a short loop between rows 3 and 1, which contradicts the assumption. 

Now let us show that there is no transition diagram with precisely two short loops. Assume first, such diagram exists and both short loops are between the same two rows (or columns), say rows 1 and 2. Applying the above rules to rows 2 and 3 we see that either there have to be more short loops between rows 2 and 3 or between rows 3 and 1, both a contradiction. So the only left possibility is that the two short loops are neither between the same two rows nor columns. 

Here there are two cases to check: either both short loops run clockwise or one run clockwise and one runs anti-clockwise (any other configuration leads to a combinatorially equivalent diagram). Assume the short loops have different orientation. Without loss of generality we have the configuration shown in Fig.\ref{fig:main_proof}(e). By Lemma \ref{lem:rc_lem}(2) applied to columns 1,2 and 2,3 we get that $(1,1) \rightarrow(2,1)$ and $(2,3) \rightarrow(3,3)$ (Fig.\ref{fig:main_proof}(f)). But now Lemma \ref{lem:rc_lem}(1) applied to columns 1,3 implies that there is a third short loop. A similar chain of deductions shows that the case with both short loops having the same orientation also cannot happen. Hence a transition diagram with two short loops is not possible. 

At last, the upper bound of six short loops follows directly from Lemma \ref{lem:comb0}, which finishes the proof of the lemma.
\end{proof}

We can now state the final lemma of the proof of the main theorem. The proof of the lemma consists of easy (but somewhat tedious) deductions of the only possible combinatorial configurations for the transition diagrams and we do not provide complete details. Lemma \ref{lem:rc_lem} is very useful to reduce the number of diagrams that have to be checked.

\begin{lemma}
Up to combinatorial equivalence, there are precisely
\begin{itemize}
\item two non-equivalent transition diagrams with precisely three short loops,
\item fifteen non-equivalent transition diagrams with precisely four short loops,
\item five non-equivalent transition diagrams with precisely five short loops, 
\item one transition diagram with precisely six short loops,
\end{itemize}
that satisfy conditions (1)-(5) of the main theorem.
\end{lemma}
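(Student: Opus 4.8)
The plan is to run a finite classification organized by the number $N$ of short loops, which Lemma~\ref{lem:num_sloops} confines to $3\le N\le 6$. The basic bookkeeping is the \emph{short-loop incidence}: since every short loop (Definition~\ref{def:short_loop}) encircles a unique $2\times 2$ block and hence is attached to exactly one pair of rows and one pair of columns, I would record it by a $3\times 3$ zero-one matrix $S$ indexed by the three row-pairs and the three column-pairs, with $S_{rc}=1$ precisely when that block is a short loop. Lemma~\ref{lem:comb0} says every row-sum and column-sum of $S$ is at most $2$, and by construction the total is $N$. Thus the admissible $S$ (up to permuting row-pairs and column-pairs) reduce to a short list for each $N$: for $N=6$ the row- and column-sums are all $2$, so $S$ is the complement of a permutation matrix; for $N=5$ they are a permutation of $(2,2,1)$; for $N=4$ they are $(2,2,0)$ or $(2,1,1)$; and for $N=3$ they are $(2,1,0)$ or $(1,1,1)$.

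Next I would translate each admissible $S$ into the geometry of the arrows using Lemma~\ref{lem:rc_lem}, which links the count $s_r$ of short loops between two rows to the number of positions at which the rows coincide: differing at all three positions forces $s_r=2$; differing at two forces $s_r\in\{1,2\}$; coinciding at two forces $s_r\le 1$ while producing a short loop between each of those rows and the third; and coinciding at all three forces $s_r=0$ and pins $N=4$. The same dictionary applies to columns. With the coincidence pattern fixed, most horizontal and vertical arrows are determined, and I would then complete the remaining free arrows subject to conditions (2)--(5) of Theorem~\ref{thm:main} (no dominated row or column, no sink, no source, no alternating cycle), discarding every completion that violates one of them.

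I would settle the rigid extremes first. For $N=6$ every pair of rows and every pair of columns must carry exactly two short loops; by the dictionary this forces each pair to coincide at most at one position, and these simultaneous maximal constraints leave essentially no freedom, collapsing after the symmetry quotient to the single claimed diagram. For $N=3$ with the uniform pattern $(1,1,1)$ the loops sit one per row-pair and one per column-pair in a Latin-square-like placement that again reduces to few classes, while the $(2,1,0)$ shape is handled by noting that a vanishing $s_r$ must come from a coincide-at-two pair (coincide-at-three is excluded, since it would force $N=4$), which via Lemma~\ref{lem:rc_lem}(3) drives the remaining counts. The substance of the argument is the middle range $N=4,5$: here several coincidence patterns survive, and for each I would fix a canonical labeling (placing a distinguished short loop in a prescribed block with, say, clockwise orientation), enumerate the admissible vertical-arrow completions, and record the inequivalent outcomes, arriving at the claimed $15$ and $5$ respectively.

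The step I expect to be the main obstacle is exactly this middle-range enumeration together with the equivalence bookkeeping. The group of combinatorial equivalences has order $6\cdot 6\cdot 2=72$ (independent permutations of rows and of columns, followed optionally by the transposition $(A,B)\mapsto(B',A')$), so it is easy to over-count by treating symmetric diagrams as distinct or to under-count by merging genuinely different ones; fixing a canonical representative in each orbit before counting is what keeps this honest. Checking that no admissible completion has been silently dropped among the fifteen four-loop diagrams is the delicate and tedious part, and the running totals should be cross-checked against $2+15+5+1=23$, matching Theorem~\ref{thm:main}; this is precisely the point at which the authors note that complete details are omitted.
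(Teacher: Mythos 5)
Your proposal follows essentially the same route as the paper: both organize the enumeration by the number of short loops (constrained to $3$--$6$ by Lemma~\ref{lem:num_sloops}), first classify the possible positions of the short loops up to row/column permutation and transposition (your incidence matrix $S$ with row- and column-sums bounded by $2$ via Lemma~\ref{lem:comb0} is exactly the paper's list of loop configurations in Fig.~\ref{fig:sloop_pos}), and then complete the remaining arrows subject to conditions (1)--(5) using the coincide/differ dictionary of Lemma~\ref{lem:rc_lem}. Like the paper, you leave the middle-range case-checking as a finite but tedious verification, and you correctly identify the quotient by the $72$-element equivalence group as the place where care is needed.
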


\begin{figure}
\begin{center}
\includegraphics{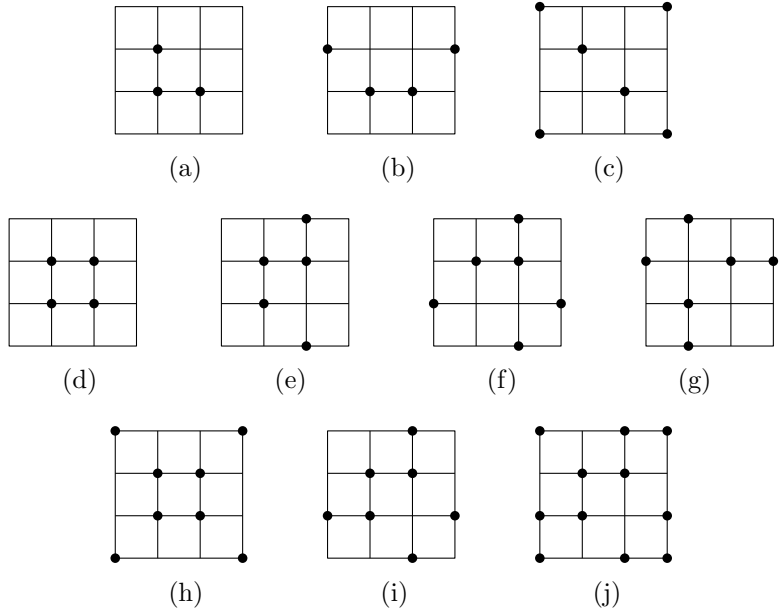}
\end{center}
\caption{The possible (non-equivalent) configurations for a transition diagram containing 3 (a-c), 4 (d-g), 5 (h-i) or 6 (j) short loops}
\label{fig:sloop_pos}
\end{figure}

\begin{proof}

Up to combinatorial equivalence, there are three ways in which three short loops can be positioned, see Fig.\ref{fig:sloop_pos}(a)-(c). It can be checked that only the first of these can give a transition diagram that satisfies (1)-(5), and there are two non-equivalent such diagrams.

Further, there are four ways to position four short loops (Fig.\ref{fig:sloop_pos}(d)-(g)), the first three of which admit five non-equivalent transition diagrams each, whereas the last one contradicts (1)-(5). 

The two ways to position five short loops (Fig.\ref{fig:sloop_pos}(h)-(i)) admit two and three non-equivalent transition diagrams satisfying (1)-(5). At last, applying Lemma \ref{lem:comb0} it is obvious that up to combinatorial equivalence the only way to position six short loops is the one shown in Fig.\ref{fig:sloop_pos}(j) and it is straightforward to check that there is only one possible transition diagram of this type.
\end{proof}

Together with Lemma \ref{lem:num_sloops}, this shows that there are precisely 23 transition diagram types satisfying (1)-(5). A list of the 23 diagram types and zero-sum game bimatrices realising them can be found in Appendix \ref{ap:types}, which finishes the proof of Theorem \ref{thm:main}. 
\end{proof}

\section{Quasi-Periodic Orbits}

In this last section of the analytic part of the paper we introduce the game-theoretic notion of quasi-periodicity and investigate the relation to its usual mathematical definition. This notion was first introduced in \cite{Rosenmuller1971}:


\begin{definition}
We say that an orbit of BR dynamics is \textbf{quasi-periodic} (in the game-theoretic sense), if its itinerary is periodic.
\end{definition}

Note that a priori this notion of quasi-periodicity of an orbit is different from the usual mathematical definition
(of an orbit which is dense in an invariant torus). However, we will show that the two notions are closely related. 

We consider the Hamiltonian dynamics on $S^3$ corresponding to a zero-sum game and its first return maps to the (two-dimensional) planes on which either $p(t)$ or $q(t)$ changes direction, i.e. where one of the $\BR$-correspondences is multivalued.

Let $S$ be such a plane, let $x \in S$ have a quasi-periodic orbit with (infinite) periodic itinerary $I = I(x)$, and let $\hat T$ be the first return map to $S$ (defined on the non-empty subset of $S$ of points whose orbits return to $S$). Note that $\hat T$ acts as a shift by a finite number of symbols on the itinerary of $x$. In particular there exists $n \geq 1$, such that $\hat{T}^n(x)$ has the same itinerary as $x$. Let us denote $T = \hat{T}^n$ so that each point in the $T$-orbit of $x$ has the same periodic itinerary $I$. 

\begin{lemma}
Let $U = \left\{ z \in S \colon I(z) = I(x) = I \right\}$ be the set of points in $S$ with itinerary $I$. Then $U$ is convex and $T(U) = U$. Moreover the restriction of $T$ to $U$, $T \colon U \rightarrow U$, is affine. 
\end{lemma}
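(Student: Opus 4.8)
The plan is to exploit the fact that, on the energy level $H^{-1}(1)\cong S^3$, the Hamiltonian flow is piecewise a translation: inside each region $R_{ij}$ the velocity is a constant vector $v_{ij}$, and the orbit is a straight segment until it meets one of the indifference planes, where it changes direction. Fixing the periodic itinerary $I$ fixes, once and for all, the ordered list of regions the orbit visits during one period, hence the ordered list of hyperplanes $H_1,\dots,H_N$ it crosses (with $H_N$ a return to $S$) together with the constant velocities $v_0,\dots,v_{N-1}$ used on the successive segments. This rigid combinatorial data is what I would use to build everything.

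First I would write down the crossing maps. Set $\Phi^0(z)=z$ and, using the fixed data of $I$, define recursively the crossing time and crossing point by $T_k(z)=\dfrac{c_k-\langle a_k,\Phi^{k-1}(z)\rangle}{\langle a_k,v_{k-1}\rangle}$ and $\Phi^k(z)=\Phi^{k-1}(z)+T_k(z)\,v_{k-1}$, where $H_k=\{w:\langle a_k,w\rangle=c_k\}$. Since $\langle a_k,v_{k-1}\rangle\neq0$ is a nonzero constant, an immediate induction shows that every $T_k$ and every $\Phi^k$ is an affine function of $z$. For points $z$ whose orbit genuinely realises $I$, these $\Phi^k(z)$ are exactly the successive crossing points, so $T|_U=\Phi^N|_U$ is the restriction of a global affine map; this proves that $T\colon U\to U$ is affine.

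Next, convexity. A point $z\in S$ has itinerary $I$ precisely when, at each step $k$, the segment issued from $\Phi^{k-1}(z)$ in direction $v_{k-1}$ stays inside the (convex) region $R_k$ and leaves it through the face $H_k$ before any other face. Writing $R_k=\bigcap_l\{w:\langle a_l,w\rangle\le c_l\}$, this is the conjunction of $T_k(z)\ge 0$, of $\langle a_l,\Phi^{k-1}(z)\rangle\le c_l$ for the faces with $\langle a_l,v_{k-1}\rangle\le 0$, and of $T_k(z)\le \sigma_{k,l}(z)$ for the remaining faces, where $\sigma_{k,l}(z)=\dfrac{c_l-\langle a_l,\Phi^{k-1}(z)\rangle}{\langle a_l,v_{k-1}\rangle}$. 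Because $\Phi^{k-1}$ and the $T_k,\sigma_{k,l}$ are all affine in $z$, each of these conditions is an affine inequality, so $U$ is the intersection of $S$ with finitely many half-spaces and is therefore convex.

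Finally, $T(U)=U$. The inclusion $T(U)\subseteq U$ is immediate: if $z\in U$ then the forward itinerary of $T(z)$ is the itinerary of $z$ advanced by one period, which equals $I$ by periodicity, so $T(z)\in U$. The reverse inclusion is the delicate point. Here I would invoke that $U$ is a bounded convex set and that $T|_U$, being an iterate of a first-return map of the Hamiltonian flow, is area-preserving; since $T|_U$ is affine, its linear part has determinant $\pm1$, so $\operatorname{area}(T(U))=\operatorname{area}(U)$. An area-preserving injective affine self-map of a bounded convex set with $T(U)\subseteq U$ must satisfy $T(U)=U$, which gives surjectivity. I expect this surjectivity step to be the main obstacle: the forward periodicity of $I$ does not by itself control the backward orbit, so one genuinely needs the invariance of area (equivalently, a reversibility argument showing that the bi-infinite orbit has periodic itinerary in both directions) to rule out $T$ mapping $U$ strictly into itself.
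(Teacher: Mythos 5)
Your argument is correct and follows essentially the same route as the paper's (much terser) proof: convexity of $U$ from the convexity of the regions together with the piecewise-linear structure of the flow, affinity of $T|_U$ from the constancy of the itinerary, $T(U)\subseteq U$ from periodicity of $I$, and surjectivity from area preservation of the return map. You simply make explicit (via the affine crossing maps and the affine inequalities defining $U$) the facts the paper delegates to the cited reference, and you correctly identify area preservation as the ingredient needed for $T(U)=U$.
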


\begin{proof}
Convexity follows from the convexity of the surfaces on which $p(t)$ or $q(t)$ change direction and the fact that the flow in each region $R_{ij}$ follows the 'rays' of a central projection. By the definition of $T$ we have that $T(U) \subseteq U$. We know (\cite{VanStrien2009a}) that $T$ is area-preserving and piecewise affine on $S$, so that $T(U) = U$. Since all points in $U$ have the same itinerary, it follows that $T \colon U \rightarrow U$ is indeed affine.
\end{proof}

\begin{theorem}
Let $x\in S$ correspond to a quasi-periodic orbit (in the game theoretical sense) of the Hamiltonian dynamics of a $3\times 3$ zero-sum bimatrix game, where $S$ is an indifference plane and $T$ is the return map to $S$, such that $I(T(x)) = I(x)$. Then one of the following holds:
\begin{enumerate}
\item The orbit of $x$ is periodic and $T^n(x) = x$ for some $n \geq 1$.
\item The $T$-orbit of $x$ lies on a $T$-invariant circle (more generally, depending on coordinates: an ellipse) and $x$ corresponds to a quasi-invariant orbit of the Hamiltonian dynamics (in the usual sense), i.e. its orbit under the flow is dense in an invariant torus. 
\end{enumerate}
In the second case $U = \left\{ z \in S \colon I(z) = I(x) \right\}$ is a disk and $T\colon U \rightarrow U$ is a rotation by an irrational angle (in suitable linear coordinates). Therefore in this case \textnormal{(2)} holds for \textit{every} $z\in U$.
\end{theorem}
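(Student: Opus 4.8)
The starting point is the preceding lemma, which already gives us that $U=\{z\in S\colon I(z)=I\}$ is a convex set with $T(U)=U$ and that $T\colon U\to U$ is affine. So I have an affine bijection $T$ of a bounded convex planar region $U$ onto itself, and because the whole flow is area-preserving, $T$ must be area-preserving as well. The plan is to classify area-preserving affine maps of a bounded convex set onto itself, and then to translate the two dynamical possibilities for $T$ back into statements about the Hamiltonian flow.

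First I would write $T(z)=Lz+c$ with $L$ a linear map and $\det L=\pm 1$ (area preservation); since $T$ preserves orientation of the flow on $S$ we have $\det L=1$, so $L\in SL(2,\R)$. Because $U$ is bounded and $T(U)=U$, every orbit $\{T^k z\}$ stays in the bounded set $U$, which forces $L$ to have no eigenvalue of modulus $>1$; together with $\det L=1$ this means the eigenvalues lie on the unit circle. I would then split into the standard cases for $SL(2,\R)$. If $L$ has an eigenvalue $\ne 1$ on the unit circle (the elliptic case, $|\operatorname{tr}L|<2$), then $T$ is conjugate by a linear change of coordinates to a genuine rotation about its unique fixed point by some angle $\theta$; a boundedness/convexity argument rules out the parabolic and non-diagonalisable borderline behaviour (a nontrivial Jordan block or a shear would push some orbit out of the bounded set $U$), leaving either $L=\mathrm{id}$ or a rotation. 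If $L=\mathrm{id}$ then $T$ is a translation, and a translation with $T(U)=U$ on a bounded set must be the identity, so every point of $U$ is fixed and case (1) holds with $T(x)=x$. Otherwise $T$ is (linearly conjugate to) a rotation by angle $\theta$ about a fixed centre: if $\theta/2\pi$ is rational then $T^n=\mathrm{id}$ for some $n$, giving periodicity as in case (1); if $\theta/2\pi$ is irrational then every orbit of $T$ in $U$ lies on an invariant circle (ellipse) centred at the fixed point and is an irrational rotation, which is case (2).

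The final assertions then follow almost immediately. Since $U$ is convex, invariant, and carries an affine rotation, $U$ must be the full disk (ellipse) bounded by the outermost invariant circle through $\partial U$ — more precisely, the rotation structure forces $U$ to be an entire elliptical disk centred at the fixed point, and the same irrational rotation acts on every point of it, so conclusion (2) indeed holds for \emph{every} $z\in U$ simultaneously. To pass from ``$T$-orbit lies on an invariant circle'' to ``the flow orbit is dense in an invariant torus'' I would use that $T$ is a first-return map to the section $S$: the suspension of an irrational circle rotation under the return-time function gives a flow orbit that is dense in a two-torus, which is the usual mathematical notion of quasi-periodicity; periodicity of $T$ correspondingly suspends to a periodic flow orbit.

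The main obstacle I anticipate is not the algebraic classification of $SL(2,\R)$ but the careful exclusion of the degenerate affine cases using only boundedness and convexity of $U$ — in particular ruling out shears and nontrivial Jordan blocks, and confirming that in the elliptic case $U$ genuinely fills out a whole elliptical disk rather than some convex invariant set on which a rotation could act in a more exotic way. A secondary technical point is justifying cleanly that the suspension of the irrational rotation is dense in a torus rather than merely in some invariant set, i.e. verifying the return-time function does not conspire to produce a lower-dimensional closure.
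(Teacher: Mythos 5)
Your proposal is correct and follows essentially the same route as the paper: the paper's own proof is only three sentences, asserting that $T|_U$ is an affine isometry of the bounded convex invariant set $U$ onto itself and hence (in the non-periodic case) a rotation by an irrational angle. Your $SL(2,\R)$ eigenvalue analysis, the exclusion of shears and nontrivial Jordan blocks via boundedness of $U$, the convexity argument showing $U$ is a full elliptical disk, and the suspension argument giving density in a torus are precisely the details the paper leaves implicit --- in particular, you actually derive the rotation structure from area-preservation and boundedness, where the paper merely asserts that $T$ is an isometry.
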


\begin{proof}
Assume that $x$ is not periodic. We already know that $T \colon U \rightarrow U$ is a planar affine transformation of $U$. Since it is also an isometry and $T(U) = U$, the only possibility is that $T$ restricted to $U$ is a rotation by an irrational angle (any other kind of planar affine transformation satisfying these conditions would have $x$ as a periodic point). The result immediately follows. 
\end{proof}

The theorem shows that every quasi-periodic orbit (in the game-theoretic sense) is actually quasi-periodic in the usual sense. Conversely, every quasi-periodic orbit in the usual sense, which lies on a torus that only intersects the indifference surfaces along whole circles (and never just partially along an arc) is clearly quasi-periodic in the game theoretic sense. Throughout the rest of this paper, we will always refer to the game-theoretic definition, when using the notion of quasi-periodicity. 

From the argument above we can also immediately conclude: 

\begin{corollary}
If a Hamiltonian system induced by a $3 \times 3$ zero-sum bimatrix game has got an orbit with periodic itinerary, then it also has an actual periodic orbit. 
\end{corollary}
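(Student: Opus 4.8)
The plan is to extract the desired periodic orbit directly from the dichotomy established in the preceding theorem. Given an orbit whose itinerary is periodic, I would first locate a point $x$ lying on one of the indifference planes $S$ at which the orbit crosses: since the itinerary is periodic and (by Assumption \ref{as:int_ne} together with Theorem \ref{thm:zs_conv}) non-stationary, the orbit must cross the indifference surfaces, and these crossings recur periodically. At such a crossing point the first return map $\hat T$ acts on the itinerary as a shift by finitely many symbols, so periodicity of $I(x)$ supplies an $n\ge 1$ with $I(\hat T^n(x))=I(x)$. Writing $T=\hat T^n$, the point $x$ then satisfies exactly the hypotheses of the theorem.

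Now I would split into the two alternatives the theorem provides. In case (1) the orbit of $x$ is already periodic, $T^n(x)=x$ for some $n$, and there is nothing more to prove. The substance of the corollary lies entirely in case (2). Here the theorem tells us that $U=\{z\in S:I(z)=I(x)\}$ is a disk and that $T\colon U\to U$ is, in suitable linear coordinates, a rotation. The key observation is that a rotation of a disk about its centre fixes that centre: letting $c$ denote the centre of $U$, we have $T(c)=c$, and $c\in U$ since it is the centre of the invariant disk and therefore shares the common itinerary $I$.

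Finally I would lift this fixed point back to the flow. Since $T=\hat T^n$, the relation $T(c)=c$ reads $\hat T^n(c)=c$, so $c$ is a periodic point of the genuine first return map $\hat T$; by definition of a return map the flow trajectory through $c$ returns to $c$ after finitely many crossings of $S$ and is therefore an actual periodic orbit of the Hamiltonian dynamics. This completes the argument. The only point requiring a little care—and the closest thing to an obstacle—is verifying that the centre of the rotation genuinely belongs to $U$ (so that it carries the itinerary $I$ and its forward trajectory is not an artifact of extending $T$ affinely beyond $U$), which is immediate because $T$ is a rotation of the \emph{whole} disk $U$ and hence its fixed centre is interior to $U$; everything else is a direct reading-off of the previous theorem.
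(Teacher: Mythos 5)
Your argument is correct and is essentially the paper's own: the paper derives the corollary "immediately from the argument above," namely the dichotomy of the preceding theorem, with the periodic orbit in case (2) being exactly the fixed centre of the rotation $T\colon U\to U$, which lies in the invariant disk $U$ and hence yields a closed orbit of the flow. Your additional care about the centre genuinely belonging to $U$ is a reasonable point to make explicit, but it does not change the route.
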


\part{Numerical Investigations and Conjectures}

In the second part of this paper we investigate the Hamiltonian dynamics induced by the BR dynamics numerically. We explore which types of orbit occur in these systems and how the combinatorial description given in the first part relates to these observations.

\section{Numerical Observations}

In this section we present some of our observations on the behaviour of BR dynamics for zero-sum games, mostly obtained from numerical experiments. The aspects we investigate are:
\begin{itemize}
\item The time fraction that different orbits spend in each of the regions $R_{ij}$.
\item The frequencies with which different orbits visit the regions $R_{ij}$ and the transition probabilities for transitions between regions. 
\item The different types of orbits that can occur and their itineraries (periodic, quasi-periodic, space-filling). 
\end{itemize}
The systems we consider are randomly generated examples of zero-sum games of different combinatorial types, for which we look at the induced Hamiltonian dynamics on level sets of $H$. For randomly chosen initial points we compute the orbits of the BR dynamics (more precisely, its induced Hamiltonian analogue) and study the time fractions spent in each region $R_{ij}$ and the frequencies, with which the orbits visit the regions. Especially with respect to the presented types of orbit we do not claim to give an exhaustive account of occuring types but rather a list of examples illustrating a few key concepts. 

Formally, for an orbit of the BR dynamics $(p(t),q(t)),~t > 0$ with itinerary $(i_0,j_0) \rightarrow (i_1,j_1) \rightarrow \ldots \rightarrow (i_k,j_k) \rightarrow \ldots$ and switching times $(t_n)$ we define
\begin{equation*}
P^{BR}_{ij}(n) = \frac{1}{t_n} \int_0^{t_n} \chi_{ij}(p(s),q(s)) ds~,
\end{equation*}
where $\chi_{ij}$ is the characteristic function of the region $R_{ij}$.

Alternatively we record the number of times, that each region is being visited by an orbit and compute the frequencies:
\begin{equation*}
Q_{ij}(n) = \frac{1}{n} \sum_{k=0}^{n-1} I_{ij}(i_k,j_k)~,~\text{ where } 
I_{ij}(i_k,j_k)=\begin{cases}
  1,  & \text{if } (i_k,j_k) = (i,j)\\
  0, & \text{otherwise.} \end{cases}
\end{equation*}
We write $P^{BR} = \left( P^{BR}_{ij} \right)_{i,j}$ and $Q = \left( Q_{ij} \right)_{i,j}$. 

Moreover throughout the following examples we look at orbits of the first return maps for the BR dynamics to certain surfaces of section. A convenient choice of such surface is a hyperplane on which either $p(t)$ or $q(t)$ changes direction, i.e. where one of the $\BR$-correspondences is multivalued. We will mostly use the surfaces where $q(t)$ changes direction: 
\begin{equation*}
S_{ij} = \left\{ (p,q) \in \Sigma : \{i,j\} \subset \BR_B(p) \right\}.
\end{equation*}

Let us now look at some examples:

\begin{example}[Uniquely ergodic case]\label{ex:erg}
{\rm
Let the zero-sum bimatrix game $(A,B)$ be given by
\begin{equation*}
A = \begin{pmatrix}
22 & 34 & -4 \\
7 & -32 & 16 \\
-53 & 96 & 23 
\end{pmatrix}~,~ B = -A~.
\end{equation*}

We numerically calculate orbits with itineraries of $10^4$ transitions for several hundreds of randomly chosen initial conditions. For all of these orbits, the evolution of $P^{BR}(n)$ and $Q(n)$ indicates a convergence to
\begin{equation*}
P^{BR} \approx 10^{-2} \times \begin{pmatrix}
13 & 5 & 27 \\
14 & 5 & 27 \\
3 & 1 & 5
\end{pmatrix}, \\
Q \approx 10^{-2} \times \begin{pmatrix}
12 & 9 & 19 \\
9 & 13 & 15 \\
10 & 5 & 8
\end{pmatrix}.
\end{equation*}

In Fig.\ref{fig:ex1_evolution}, the evolution of some of the $P^{BR}_{ij}(n)$ and $Q_{ij}(n)$ along an orbit is shown. 
\begin{figure}
\begin{center}
\includegraphics[width = 0.8\textwidth]{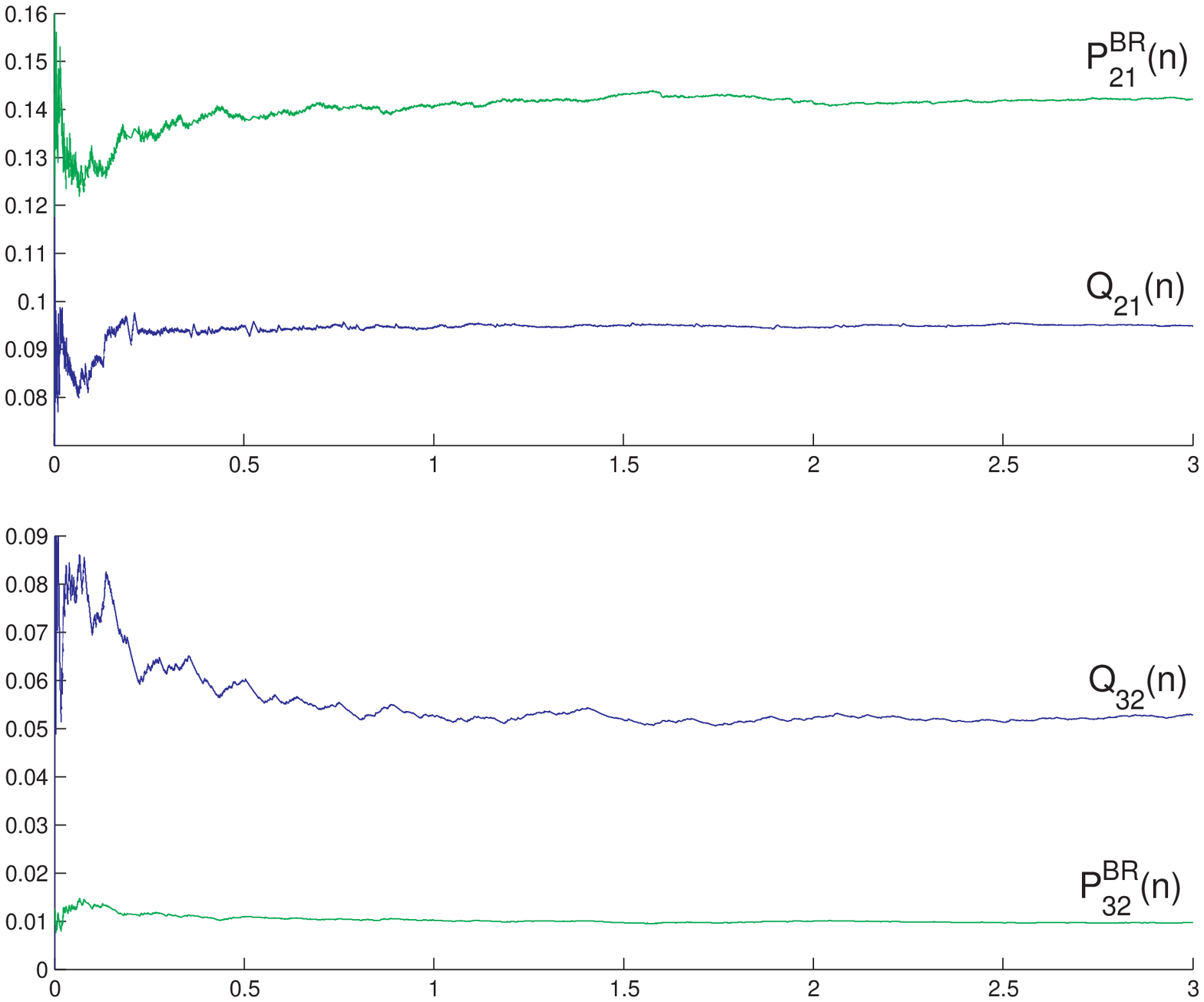}
\end{center}
\caption{Example \ref{ex:erg}: 
\newline The first plot shows the evolution of $P^{BR}_{21}(n)$ and $Q_{21}(n)$ along an orbit with itinerary of length $3\times 10^4$. A certain 'stabilisation' and convergence to the values $p_{21} \approx 0.14$ and $q_{21} \approx 0.09$ or small intervals containing these values can be observed. 
\newline The second plot shows the evolution of $P^{BR}_{32}(n)$ and $Q_{32}(n)$ along the same orbit. Here the observed limits (or limit intervals) are near the values $p_{32} \approx 0.01$ and $q_{32} \approx 0.05$. }
\label{fig:ex1_evolution}
\end{figure}
This or very similar statistical behaviour is observed for all sampled initial conditions. It seems to suggest that initial conditions with quasi-periodic orbits have zero or very small Lebesgue measure in the phase space of BR dynamics for this bimatrix game, as quasi-periodicity in all our experiments leads to very rapid convergence to certain frequencies. Most of the space seems to be filled with orbits that statistically resemble each other in the sense that they all visit certain portions of the space (the regions $R_{ij}$) with asymptotically equal (or very close) frequencies. The same seems to hold for the fraction of time spent in each region by the orbits.

\begin{figure}
\begin{center}
\includegraphics[width = \textwidth]{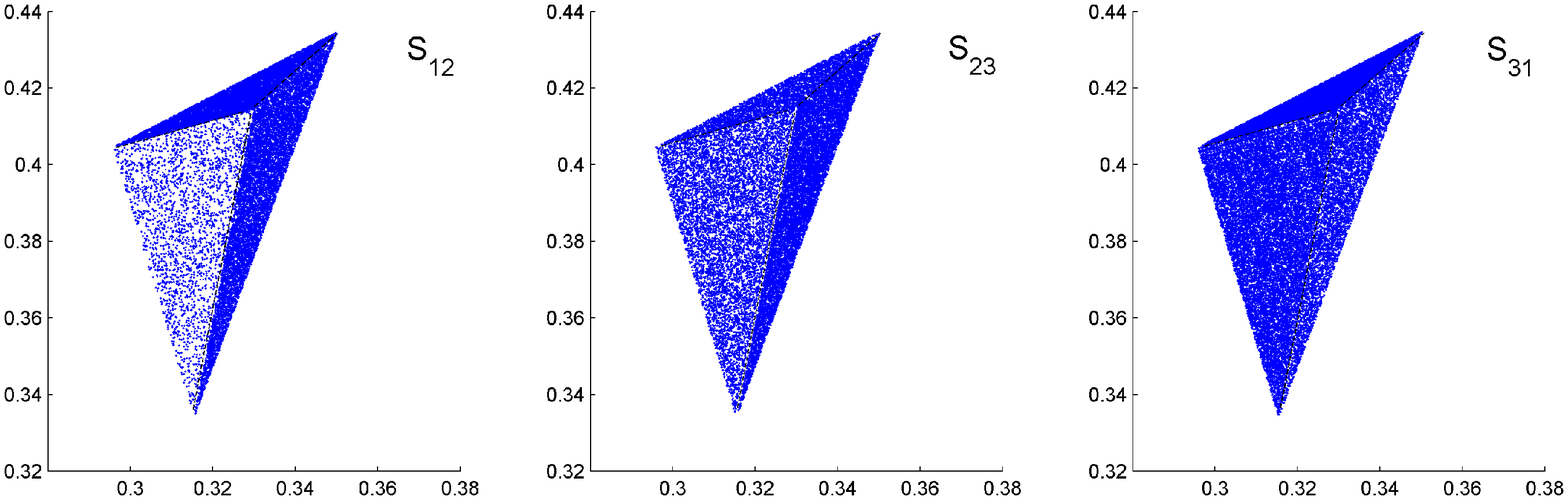}
\end{center}
\caption{Example \ref{ex:erg}: Typical orbit of induced Hamiltonian dynamics intersected with the three surfaces $S_{12}$, $S_{23}$ and $S_{31}$ (hyperplanes, where $q(t)$ changes direction). The original orbit has an itinerary of length $2 \times 10^6$. The three triangular regions in each of the $S_{ii'}$ correspond to the different possible transitions between regions $(i,j)$ and $(i',j)$ for $j=1,2,3$ and are indicated by dashed lines. (In other words the images show the orbit of the first return map to the surfaces of section $S_{ii'}$.) \newline
The different visiting frequencies of the regions can clearly be seen by the different densities of orbit points. The orbit points inside each triangular region seem to be uniformly distributed. }
\label{fig:ex1_section}
\end{figure}

Fig.\ref{fig:ex1_section} shows the intersections of an orbit of (\ref{eq:br_dynamics}) for this game with $S_{12}$, $S_{23}$ and $S_{31}$ (the hypersurfaces where $q(t)$ changes direction), i.e. the orbit of the first return map to these surfaces. Each $S_{ii'}$ consists of three triangular pieces, corresponding to the three pieces of hypersurface between regions $(i,j)$ and $(i',j)$ for $j=1,2,3$. Inside each of these triangles, the orbit seems to rather uniformly fill the space, suggesting ergodicity (of Lebesgue measure). If the BR dynamics had invariant tori, these would appear on all or some of these sections as invariant circles whose interior cannot be entered by orbits starting outside. Judging from the above observations, in this example they either don't exist or have very small radius. 
}\end{example}

\begin{example}[Space decomposed into ergodic and elliptic regions]\label{ex:qp}
{\rm
In this example we consider a bimatrix game, which is an element in a family of bimatrix games thoroughly studied in \cite{Sparrow2007} and \cite{VanStrien2009b}.  Let the zero-sum bimatrix game $(A,B)$ be given by
\begin{equation*}
A = \begin{pmatrix}
1 & 0 & \sigma \\
\sigma & 1 & 0 \\
0 & \sigma & 1 
\end{pmatrix}~,~ B = -A~, 
\end{equation*}
where $\sigma = \frac{\sqrt{5}-1}{2} \approx 0.618$ is the golden mean. 

Two types of orbit can be (numerically) observed for the BR dynamics of this game. The first type resembles the orbits in the previous example. The empirical frequencies $P^{BR}_{ij}(n)$ and $Q_{ij}(n)$ along such orbits initially behave erratically but seem to suggest convergence to certain values or narrow ranges of values, which are the same for all such orbits:

\begin{equation*}
P^{BR} \approx 10^{-2} \times \begin{pmatrix}
9 & 11 & 13 \\
13 & 9 & 11 \\
11 & 13 & 9
\end{pmatrix}, \\
Q \approx 10^{-2} \times \begin{pmatrix}
11 & 13 & 9 \\
9 & 11 & 13 \\
13 & 9 & 11
\end{pmatrix}.
\end{equation*}

It can be observed that the values of $Q(n)$ are perhaps less erratic and in most of our experiments they seem to converge faster than those of $P^{BR}(n)$. As an example, the evolution of $P^{BR}_{32}(n)$ and $Q_{32}(n)$ along a typical orbit can be seen in Fig.\ref{fig:ex2_p32_erg}. 

\begin{figure}
\begin{center}
\includegraphics[width = 0.8\textwidth]{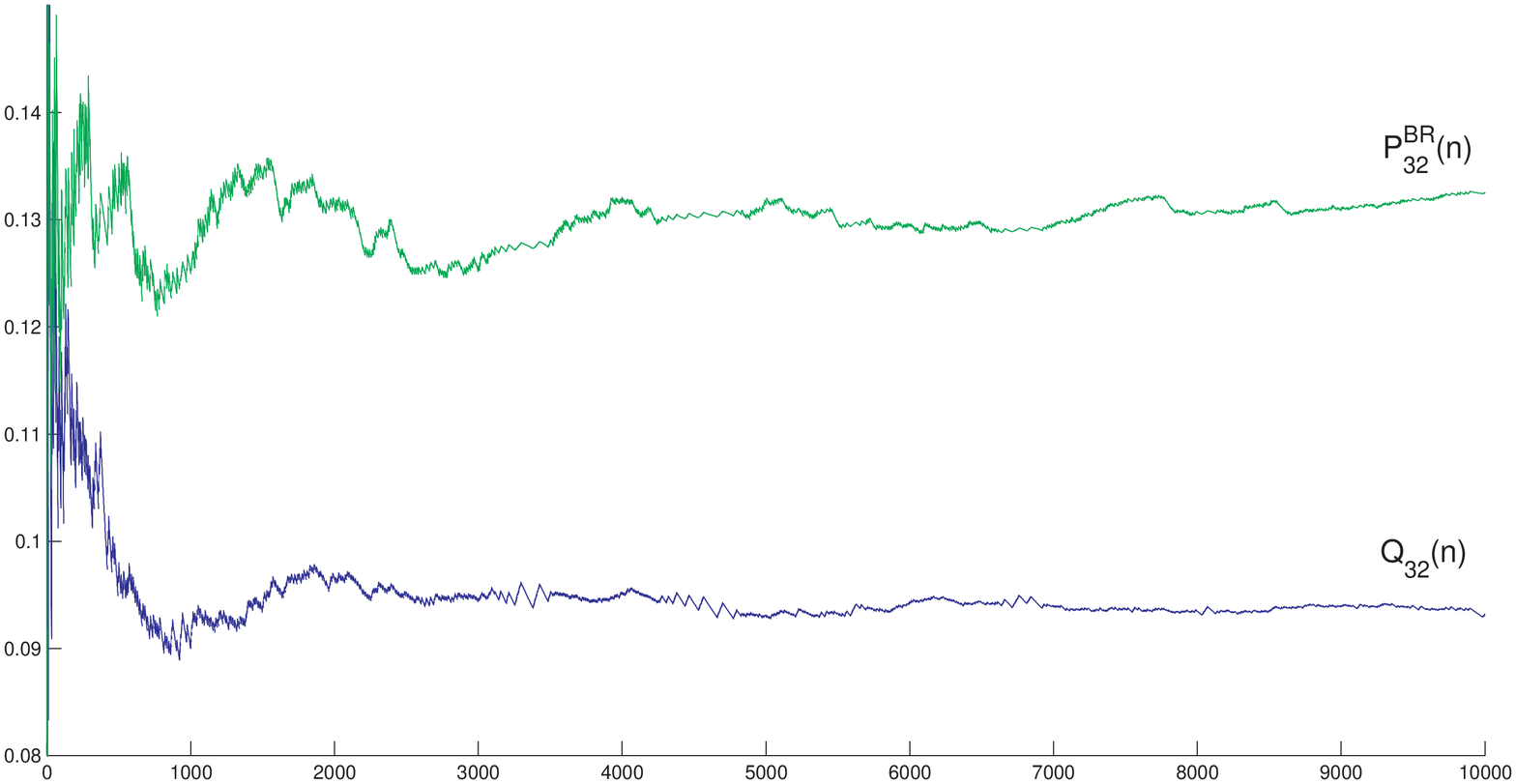}
\end{center}
\caption{Example \ref{ex:qp}: The evolution of $P^{BR}_{32}(n)$ and $Q_{32}(n)$ along a typical orbit outside of the invariant torus. }
\label{fig:ex2_p32_erg}
\end{figure}

As in the previous example, in Fig.\ref{fig:ex2_section} we show the intersection of one such orbit with the surfaces $S_{ii'}$. Once again the orbit points have a certain seemingly uniform density inside each region, but here they leave out an elliptical region on each of the hypersurfaces. This invariant region consists of invariant circles, formed by quasi-periodic orbits of the system (the second type of observed orbits). The center of the circles corresponds to an actual periodic orbit, i.e. an elliptic fixed point of the return map to one of these surfaces. See \cite{VanStrien2009b} for an explicit analytic investigation of this (which is made possible by the high symmetry of this particular bimatrix game). 

\begin{figure}
\begin{center}
\includegraphics[width = \textwidth]{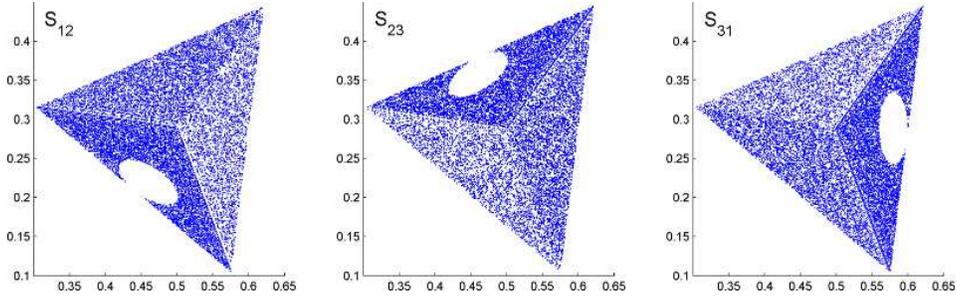}
\end{center}
\caption{Example \ref{ex:qp}: Typical orbit of induced Hamiltonian dynamics intersected with the three hyperplanes, where $q(t)$ changes direction. The original orbit has an itinerary of length $10^6$. The different visiting frequencies of the regions can clearly be seen by the different densities of orbit points. The orbit points inside each triangular region seem to be uniformly distributed but leave out elliptical regions in some of the regions. These are filled with quasi-periodic orbits forming invariant circles.}
\label{fig:ex2_section}
\end{figure}

The invariant circles in the elliptical region correspond to invariant tori in the BR dynamics. Their itinerary is periodic with period 6:
\begin{equation*}
(1,1) \rightarrow (1,2) \rightarrow (2,2) \rightarrow (2,3) \rightarrow (3,3) \rightarrow (3,1) \rightarrow (1,1) \rightarrow \ldots. 
\end{equation*}

\begin{figure}
\begin{center}
\includegraphics[width = \textwidth]{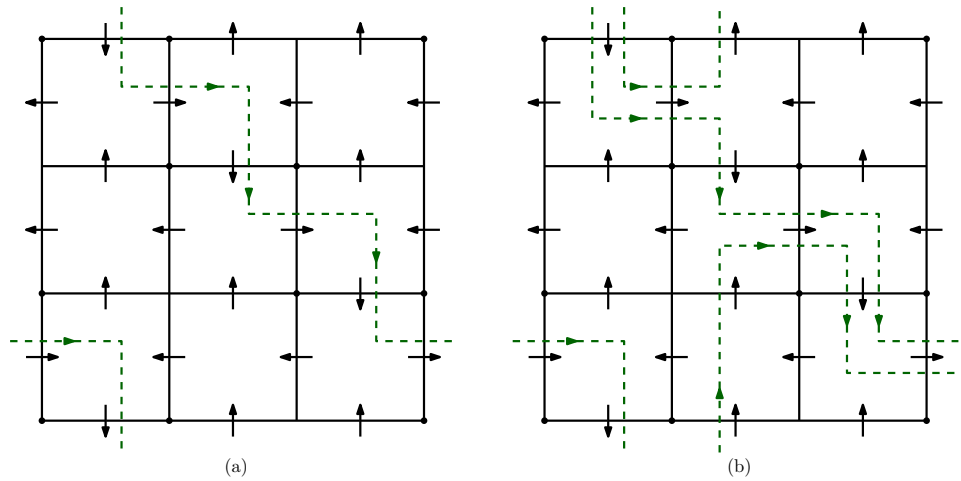}
\end{center}
\caption{(a) Example \ref{ex:qp}: The transition diagram of the bimatrix game. The periodic itinerary of the quasi-periodic orbits forming an invariant torus is indicated by a dashed line.
\newline (b) Example \ref{ex:qp_loop}: The transition diagram is the same as in the previous example. However, there is an invariant torus of quasi-periodic orbits with a more complicated periodic itinerary.}
\label{fig:type6_orbits}
\end{figure}

Fig.\ref{fig:type6_orbits}(a) shows the transition diagram for this bimatrix game. The periodic itinerary is indicated by a dashed line as a loop in the transition diagram. The empirical frequencies along such quasi-periodic orbits converge to
\begin{equation*}
P^{BR} = Q =  \begin{pmatrix}
\frac{1}{6} & \frac{1}{6} & 0 \\
0 & \frac{1}{6} & \frac{1}{6} \\
\frac{1}{6} & 0 & \frac{1}{6}
\end{pmatrix}.
\end{equation*}

Fig.\ref{fig:ex2_section} suggests that there are no other invariant tori for this system, i.e. no open set of initial conditions outside of the visible elliptical regions, whose orbits are all quasi-periodic.
}\end{example}

A question that arises naturally from the above example is the following: does an invariant torus of quasi-periodic orbits always have a 'simple' periodic itinerary as the above? Are the periods of such elliptic islands necessarily equal to 6? As the next example shows, the situation can indeed be more complicated and less simple paths through the transition diagram are possible candidates for the periodic itinerary of quasi-periodic orbits in an invariant torus. 

\begin{example}[Quasi-periodic behaviour with itineraries of higher period]\label{ex:qp_loop}
{\rm
Consider the bimatrix game $(A,B)$ with
\begin{equation*}
A = \begin{pmatrix}
84 & -37 & 10 \\
24 & 33 & -14 \\
-26 & 9 & 20
\end{pmatrix}~,~ B = -A~.
\end{equation*}

\begin{figure}
\begin{center}
\includegraphics[width = \textwidth]{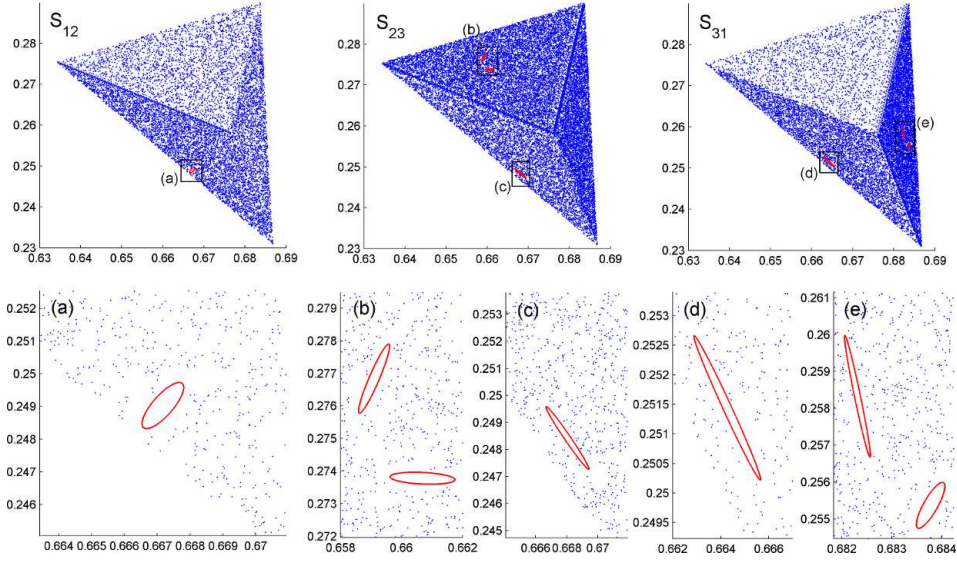}
\end{center}
\caption{Example \ref{ex:qp_loop}: Two orbits (their intersections with the surfaces $S_{ii'}$) are shown: one which stochastically fills most of the space and one which lies in an invariant torus. The latter intersects the first surface once and the other two three times each. The intersections of the (very thin) torus with the surfaces are marked with rectangles and shown magnified in the bottom row.}
\label{fig:erg_qp_orbits}
\end{figure}

Generally, the observations here coincide with Example \ref{ex:erg}. However, one can detect a (quite thin) invariant torus. Fig.\ref{fig:erg_qp_orbits} shows a typical orbit stochastically filling most of the space. In the bottom row of the same figure, the regions marked by rectangles are enlarged to reveal a thin invariant torus. These quasi-periodic orbits intersect $S_{12}$ once, $S_{23}$ and $S_{31}$ three times each. The orbits look essentially like the quasi-periodic orbits in the previous example, but with an extra loop added. The itineraries are periodic with period 13, where each period is of the form
$$
\begin{array}{llll}
(1,1)\rightarrow (1,2) &                                   &\rightarrow (2,2)\rightarrow (2,3)\rightarrow (3,3)\rightarrow (3,1)\rightarrow \\
(1,1)\rightarrow (1,2) & \!\!\!\!  \rightarrow (3,2)   \!\!\!     & \rightarrow (2,2)\rightarrow (2,3)\rightarrow (3,3)\rightarrow (3,1)\rightarrow (1,1). 
\end{array}
$$
In Fig.\ref{fig:type6_orbits}(b), this itinerary is shown as a loop in the transition diagram. The example demonstrates that combinatorially more complicated quasi-periodic orbits are possible for open sets of initial conditions in the BR dynamics of zero-sum games. 
}\end{example}

The next example shows an even more complex quasi-periodic structure and gives numerical evidence for more subtle and involved effects than those observed above:

\begin{example}[Coexistence of different elliptic behaviour]\label{ex:arnold_diff}
{\rm
Let us now consider the bimatrix game $(A,B)$ with 
\begin{equation*}
A = \begin{pmatrix}
-92 & 18 & 52 \\
62 & -37 & -33 \\
-10 & 9 & -18\end{pmatrix}~,~ B = -A~.
\end{equation*}

\begin{figure}
\begin{center}
\includegraphics[width = \textwidth]{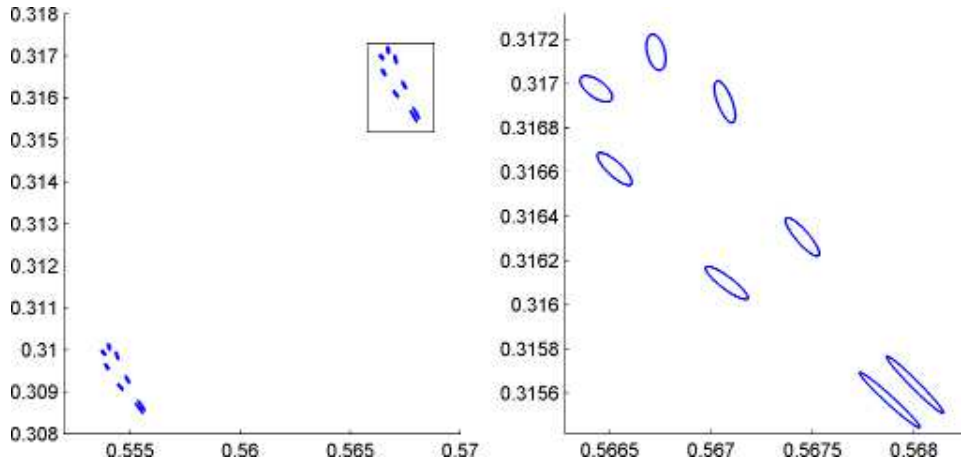}
\end{center}
\caption{Example \ref{ex:arnold_diff}: The invariant torus (corresponding to quasi-periodic orbits) intersects two of the hypersurfaces where $q(t)$ changes direction as shown on the left, while not intersecting the third hypersurface at all. The right image is a magnification of the region marked by a rectangle.}
\label{fig:type12_qp_orbit}
\end{figure}

As in all the previous examples, the largest part of the phase space of the BR dynamics seems to be filled with orbits which stochastically fill most of the space and along which the frequency distributions $P^{BR}(n)$ and $Q(n)$ seem to converge to certain (orbit-independent) values. Again, an invariant torus can be found. It is more complicated than those observed in the other examples (see Fig.\ref{fig:type12_qp_orbit}). The orbits forming this invariant torus are quasi-periodic and have an itinerary of period 60. Its structure suggests a generalisation of the type of itinerary observed in Example \ref{ex:qp_loop}. It consists of a sequence of blocks of the following two forms:
\begin{align*}
a &= \left( (1,1) \rightarrow (3,1) \rightarrow (2,1) \rightarrow (2,2) \rightarrow (3,2) \rightarrow (3,3) \rightarrow (1,3) \rightarrow (1,1) \right) \text { and } \\
b &= \left( (1,1) \rightarrow (3,1) \rightarrow (2,1) \rightarrow (2,2) \rightarrow (3,2) \rightarrow (3,3) \rightarrow (1,3) \rightarrow (1,2) \rightarrow (1,1) \right).
\end{align*}

\begin{figure}
\begin{center}
\includegraphics[width = \textwidth]{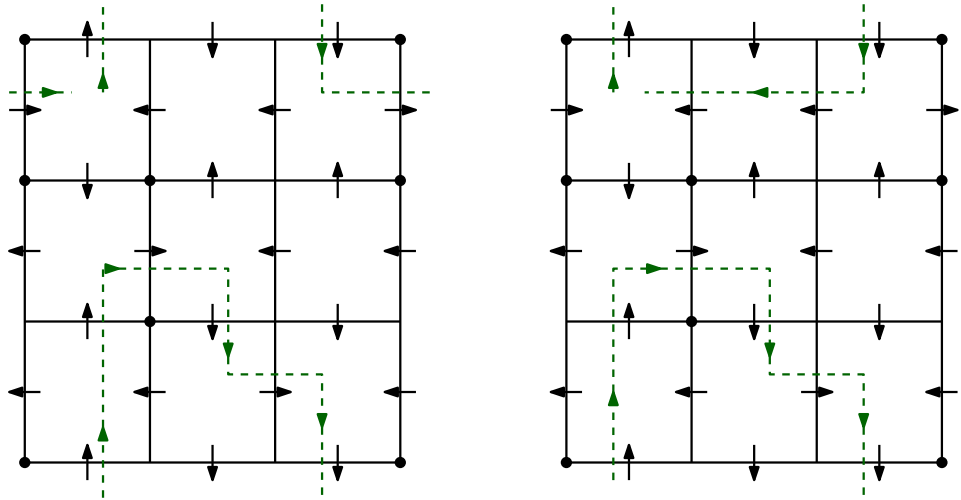}
\end{center}
\caption{Example \ref{ex:arnold_diff}: The two block types $a$ and $b$ of the itinerary for the quasi-periodic orbits in this system. Note that they differ by one step only.}
\label{fig:type12_blocktypes}
\end{figure}

The two blocks are shown as pathes in the transition diagram in Fig.\ref{fig:type12_blocktypes}. Each period of the itinerary of orbits in the invariant torus then has the form 
\begin{equation*}
a \rightarrow a \rightarrow b \rightarrow a \rightarrow b \rightarrow b \rightarrow a \rightarrow b.
\end{equation*}
As in the previous example the two blocks are the same except for one element (in the previous example the itinerary consisted of two blocks concatenated alternatingly).

\begin{figure}
\begin{center}
\includegraphics[width = \textwidth]{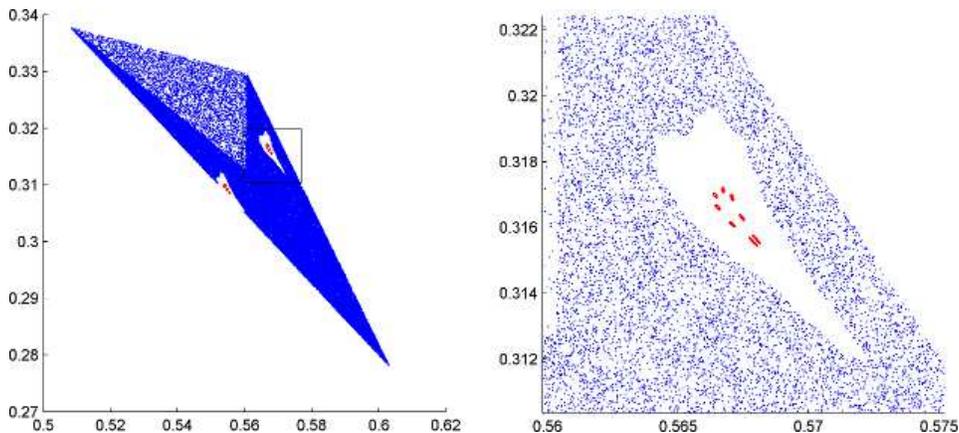}
\end{center}
\caption{Example \ref{ex:arnold_diff}: The left image shows a typical orbit that seems to fill almost all of the space. The right image shows a magnified view of the 'heart-shaped' region spared out by this orbit. Also both images show the invariant circles of a quasi-periodic orbit inside the 'heart-shaped' region.}
\label{fig:type12_erg_orbit}
\end{figure}

In Fig.\ref{fig:type12_erg_orbit} the intersection of an orbit outside of the invariant torus with one of the hypersurfaces is shown together with a quasi-periodic orbit. The orbit seems to have essentially the same property of filling the space outside the invariant torus, as in the previous examples. However, a closer look at a neighbourhood of the invariant circles (see the right part of Fig.\ref{fig:type12_erg_orbit}) reveals that the orbit not only misses out the invariant circles but also a certain 'heart-shaped' region surrounding these. The investigation of orbits with initial conditions in this set shows a range of effects not observed in any of the previous examples.

\begin{figure}
\begin{center}
\includegraphics[width = \textwidth]{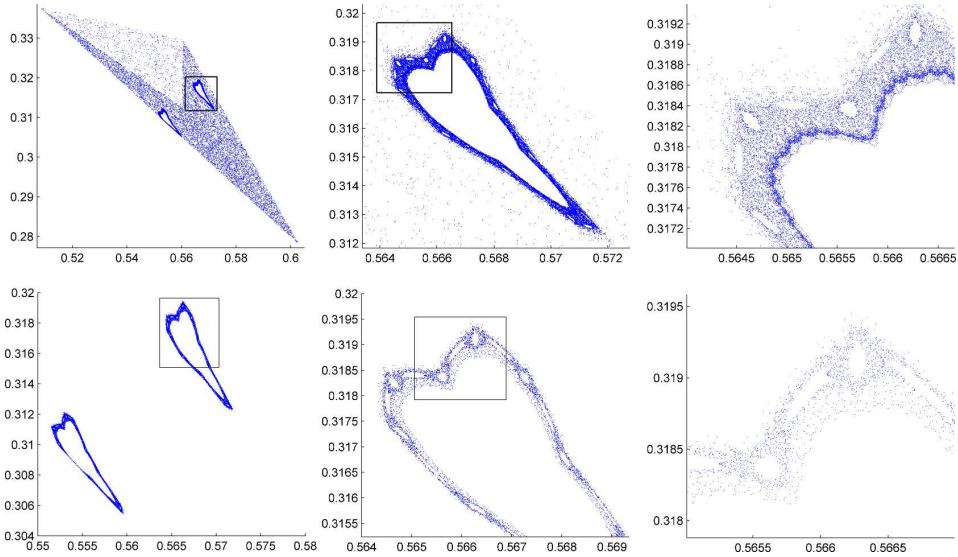}
\end{center}
\caption{Example \ref{ex:arnold_diff}: Two different orbits with initial conditions in the 'heart-shaped' region. Regions of stochastic motion as well as invariant islands of periodic and quasi-periodic orbits are clearly visible. The first orbit spends a long time in the 'heart-shaped' region before it diffuses into the larger 'ergodic' part of the space.}
\label{fig:type12_orbit_examples}
\end{figure}

\begin{figure}
\begin{center}
\includegraphics[width = \textwidth]{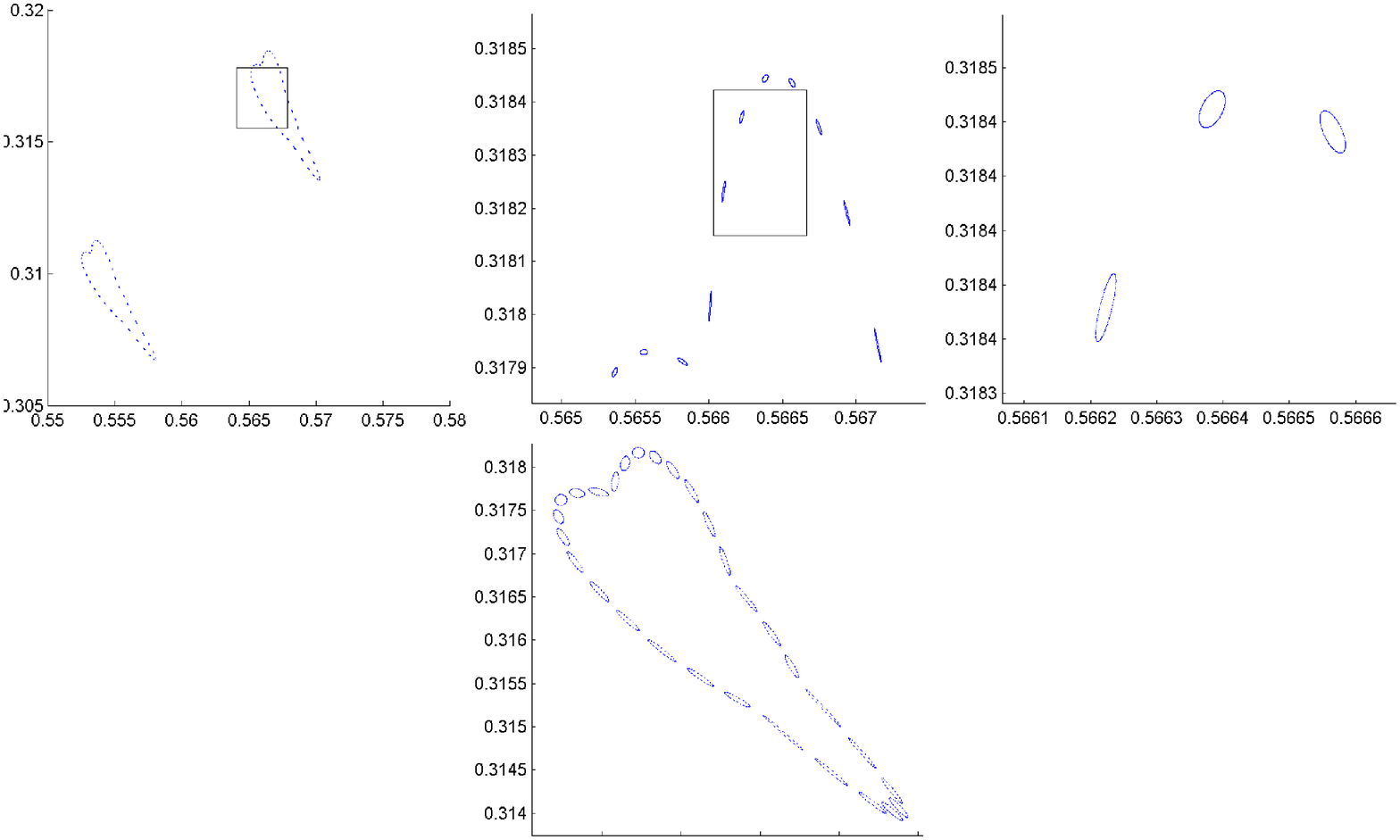}
\end{center}
\caption{Example \ref{ex:arnold_diff}: Two different quasi-periodic orbits (invariant tori for the BR dynamics) of different periods.}
\label{fig:type12_per_orbits}
\end{figure}

\begin{figure}
\begin{center}
\includegraphics[width = 0.8\textwidth]{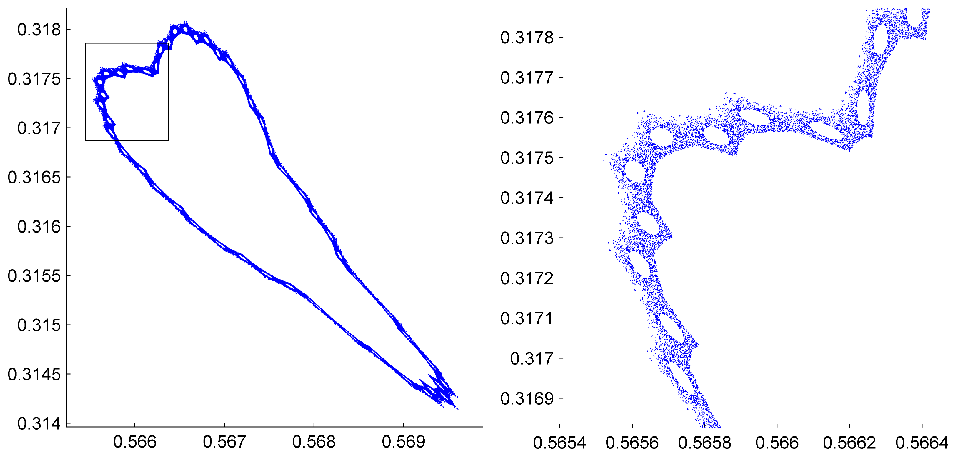}
\end{center}
\caption{Example \ref{ex:arnold_diff}: An orbit restricted to an invariant heart-shaped annulus.}
\label{fig:type12_restricted_orbit}
\end{figure}

Several different orbits with initial conditions in this region can be seen in Fig.\ref{fig:type12_orbit_examples}. The orbit points show complicated structures, revealing a large number of 'stochastic' regions as well as invariant regions of periodic orbits of high periods and corresponding quasi-periodic orbits (Fig.\ref{fig:type12_per_orbits} shows some examples of such quasi-periodic orbits of different higher periods). Some of these orbits spend very long times (itineraries of length $10^6$ and more) in the heart-shaped region before diffusing into the much larger 'stochastic' rest of the space. On the other hand we observe orbits that stochastically fill (heart-shaped) annuli leaving out islands of quasi-periodic orbits. These annuli seem to be invariant for the dynamics (see Fig.\ref{fig:type12_restricted_orbit}). 

Altogether the observations described above strongly indicate the occurrence of {\lq}Arnol'd diffusion{\rq}:
the coexistence of  a family of invariant annuli, which contain regions of stochastic (space-filling) motion and islands of further periodic orbits and invariant circles (quasi-periodic orbits). 
}\end{example}

\section{Conclusion and Discussion}

We would like to propose some open questions for further investigation.

\begin{enumerate}
\item Does the Hamiltonian system induced by a $3\times 3$ zero-sum bimatrix game always have quasi-periodic orbits / invariant tori? Example \ref{ex:erg} suggests that it is possible to have topological mixing and that the Lebesgue measure is ergodic. However, this might be due to the limited resolution of our numerical simulations and images. 

\item Are there orbits which are dense outside of the elliptic regions? Are almost all orbits outside of the elliptic regions dense? 

\item Example \ref{ex:arnold_diff} suggests that the system has infinitely many elliptic islands corresponding to quasi-periodic orbits of different periods. The pictures of orbits (e.g. Fig.\ref{fig:type12_orbit_examples} and Fig. \ref{fig:type12_restricted_orbit}) show many regions that could potentially contain such elliptic islands of quasi-periodic orbits of different periods. All regions that we investigated for this property actually revealed quasi-periodic orbits.

\item Given a specific bimatrix example, are there a finite number of blocks, so that the itinerary of any orbit on an elliptic island is periodic with each period being a (finite) concatenation of these blocks? The examples we looked at suggest the answer to be positive. 
\end{enumerate}

In Part 1 of this paper we assigned to a Hamiltonian system a transition diagram, giving a necessary condition on the itinerary of orbits. If we were able to develop some kind of 'admissibility condition' (i.e. a sufficiency condition), we could perhaps obtain results such as 'density of periodic orbits', in the same way as was done for quadratic maps of the interval. 

In Part 2 we demonstrated that this class of dynamics is sufficiently rich to mimic many of the intricacies of smooth Hamiltonian systems. In spite of the many open questions that still remain, it appears that these piecewise affine Hamiltonian systems could provide a new way of gaining insight into global dynamics of Hamiltonian systems. 

\bibliographystyle{plain}
\bibliography{GameTheory}

\newpage
\appendix
\section{The 23 transition diagram types} \label{ap:types}
We list all 23 transition diagram types as in Theorem \ref{thm:main} sorted by the number of short loops contained in them, together with the respective matrices $A$, such that the bimatrix games $(A,-A)$ realise the given diagram types. 

\begin{figure}[b]
\begin{center}
\includegraphics[width = 0.88\textwidth]{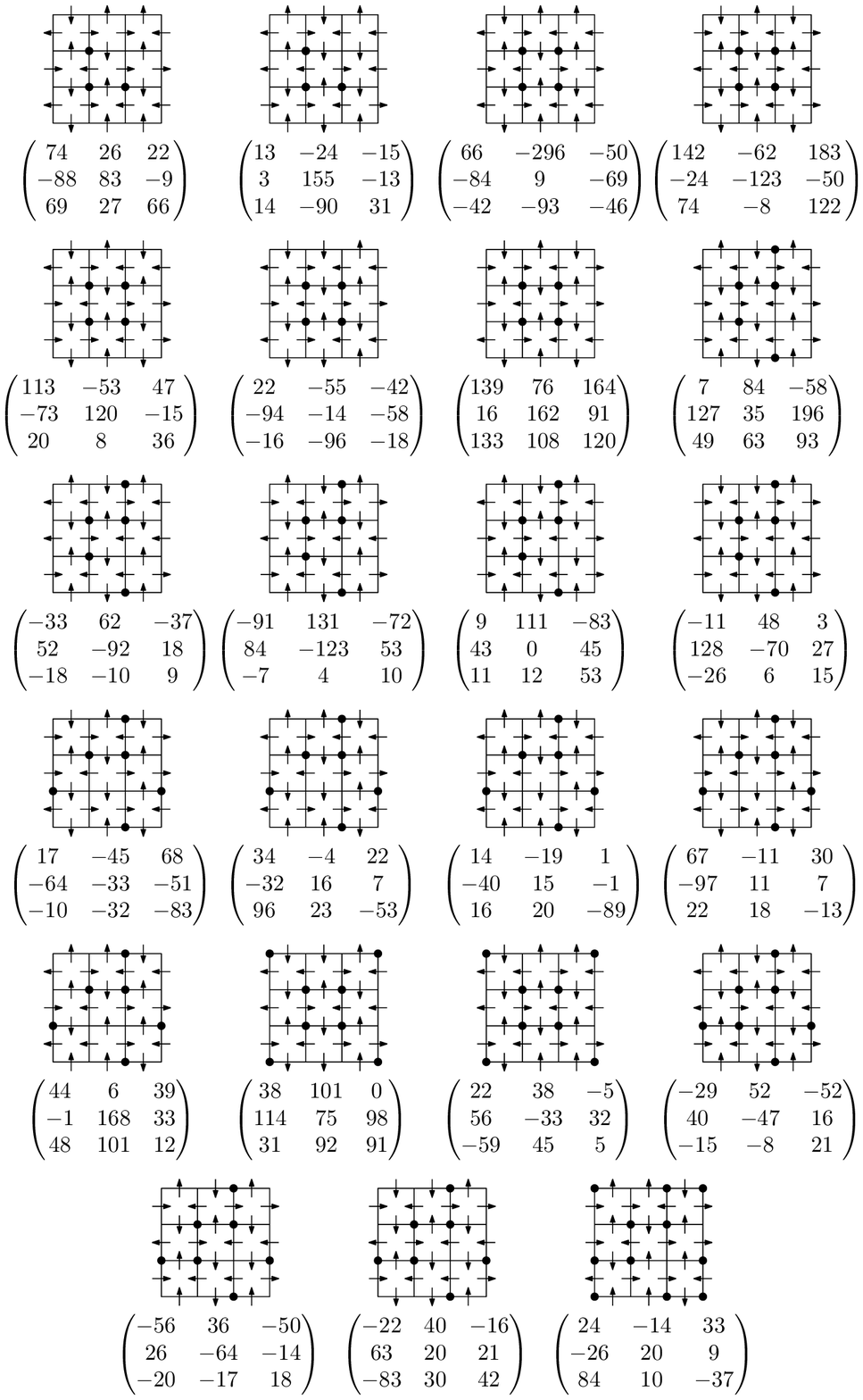}
\end{center}
\end{figure}

\end{document}